\documentclass[a4paper,11pt,reqno]{amsart}
\usepackage[utf8]{inputenc}
\pdfoutput=1

\usepackage{hyperref}

\usepackage[sc]{mathpazo}
\linespread{1.05}

\usepackage[tracking]{microtype}

\usepackage[T1]{fontenc}
\usepackage{textcomp}
\usepackage{amssymb}

\usepackage[arrow,matrix]{xy}

\newtheorem{thm}{Theorem}
\newtheorem{prop}{Proposition}[section]
\newtheorem{lem}[prop]{Lemma}

\theoremstyle{definition}

\theoremstyle{remark}

\newtheorem{claim}[prop]{Claim}

\DeclareMathOperator{\ch}{ch}
\DeclareMathOperator{\Hom}{Hom}
\DeclareMathOperator{\Pic}{Pic}

\newcommand{\bF}{{\mathbb{F}}}
\newcommand{\bP}{{\mathbb{P}}}
\newcommand{\bZ}{{\mathbb{Z}}}
\newcommand{\sO}{{\mathcal{O}}}

\begin{document}

\title[Contractions of 2-Fanos]{Extremal contractions of 2-Fano fourfolds}

\author{Florian Schrack}
\address{Mathematisches Institut\\ Universität Bayreuth\\ 95440 Bayreuth\\ Germany}
\email{florian.schrack@uni-bayreuth.de}

\subjclass[2010]{Primary 14J45; Secondary 14E30}
\keywords{2-Fano manifolds, fourfolds, extremal contractions, Mori theory}

\date{June 12, 2014}

\begin{abstract}
We consider extremal contractions on smooth Fano fourfolds whose second Chern character is positive. We show that such contractions can neither be of fiber type nor contract a divisor to a point.
\end{abstract}

\maketitle
\section{Introduction}
A Fano manifold~$X$ is called \emph{2-Fano}, if its second Chern character $\ch_2(X) := \ch_2(T_X)$ is positive, i.e., $\ch_2(X).S > 0$ for all $S\in \overline{NE}_2(X)\setminus\{0\}$. We recall that for a vector bundle~$E$, its second Chern character~$\ch_2(E)$ is given by
\[ \ch_2(E) = \tfrac{1}{2}\bigl(c_1^2(E)-2c_2(E)\bigr). \]

2-Fano manifolds have first been introduced by de Jong and Starr in \cite{dJS06} and \cite{dJS07} in order to obtain a natural sufficient condition for the technical notion of \emph{rational simple connectedness}. The property of rational simple connectedness is used in~\cite{dJHS11} to prove a generalization to the surface case of the Graber--Harris--Starr theorem on the existence of sections in families of rationally connected varieties over curves \cite{GHS03}.

The classification of 2-Fano manifolds is still very much an open problem. In dimension~$3$, as shown by Araujo--Castravet in~\cite{AC12}, the only 2-Fano manifolds are $\bP_3$ and $Q_3$. For $n:=\dim X \ge 4$, \cite{AC12} gives a classification of 2-Fano manifolds with index $\ge n-2$, but for arbitrary index, the overall picture remains unclear.

All known examples of $2$-Fanos seem to have second Betti number one, so a natural question to ask is whether it is possible for a $2$-Fano manifold~$X$ to have $b_2(X) \ge 2$.

The present paper studies this question in dimension~$4$, i.e., we let $X$ be a $2$-Fano fourfold and assume that $b_2(X)\ge 2$. Then, by Mori theory, there exists an extremal contraction
\[
f\colon X \to Y,
\]
where $Y$ is a normal projective variety with $1\le \dim Y \le 4$.

We first study the cases where $\dim Y < 4$. In section~\ref{sec:conic} we  exclude the case $\dim Y = 3$ by showing that a 2-Fano manifold $X$ of arbitrary dimension cannot have an extremal contraction to a variety of dimension $\dim X - 1$ (cf.~Proposition~\ref{prop:conicbund}). Sections \ref{sec:p2fib} and \ref{sec:p3q3fib} deal with the cases $\dim Y = 2$ and $\dim Y = 1$, respectively. We thus arrive at the following theorem:
\begin{thm}
Let $X$ be a smooth 2-Fano fourfold and $f\colon X \to Y$ the contraction of an extremal ray in~$\overline{NE}(X)$. Then $f$ is birational.
\end{thm}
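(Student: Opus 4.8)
The plan is to show that $f$ cannot be of fiber type, i.e.\ that $\dim Y = \dim X = 4$; since in the situation at hand $b_2(X)\ge 2$ forces $\dim Y\ge 1$, this amounts to ruling out $\dim Y\in\{1,2,3\}$, which we do case by case. The case $\dim Y = 3$ is precisely Proposition~\ref{prop:conicbund}: a $2$-Fano manifold of any dimension admits no extremal contraction onto a variety of dimension one less; this is carried out in Section~\ref{sec:conic}. For the two remaining cases the common device is a restriction computation along a general fiber $F$ of $f$: since $N_{F/X}\cong\sO_F^{\oplus\dim Y}$ is trivial, the sequence $0\to T_F\to T_X|_F\to N_{F/X}\to 0$ gives $\ch_2(T_X)|_F = \ch_2(T_F)$, and for an arbitrary surface $Z\subset X$ the analogous sequence gives $\ch_2(X).[Z] = \int_Z\ch_2(T_Z)+\int_Z\ch_2(N_{Z/X})$. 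Because $[Z]\in\overline{NE}_2(X)\setminus\{0\}$ for every effective surface, the $2$-Fano hypothesis makes this strictly positive for all such $Z$, so it suffices to exhibit a single $Z$ for which it is $\le 0$. The key simplification is that if $Z$ is geometrically ruled over a smooth curve then $\int_Z\ch_2(T_Z)=\tfrac12(K_Z^2-2c_2(Z))=0$, so $\ch_2(X).[Z]=\int_Z\ch_2(N_{Z/X})$, and we only have to make the normal-bundle term non-positive by taking $Z$ negative enough.

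For $\dim Y = 2$ (Section~\ref{sec:p2fib}) the general fiber $F$ is a smooth del Pezzo surface, and $\ch_2(X).[F] = \int_F\ch_2(T_F) = \tfrac12(K_F^2-2c_2(F))$ is positive only for $F\cong\bP_2$, as one checks on the list of del Pezzo surfaces. Hence $f$ is a $\bP_2$-fibration, and — by the structure of such extremal contractions — a $\bP_2$-bundle. Restricting over a suitably chosen curve $C\subset Y$ gives $f^{-1}(C)\cong\bP(E_C)$ for a rank-$3$ bundle $E_C$ on $C$; for a rank-$2$ quotient $E_C\twoheadrightarrow Q$ with kernel a line bundle $K$, the relative-line surface $Z=\bP(Q)$ is geometrically ruled over $C$, and a direct Chern-class computation using the Grothendieck relation and $N_{\bP(Q)/\bP(E_C)}$ yields $\ch_2(X).[Z]=\tfrac12\deg Q-\deg K$. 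This is $\le 0$ as soon as $E_C$ carries a sub-line-bundle $K$ with $\deg K\ge\tfrac13\deg E_C$, which can be arranged by choosing $C$ appropriately — e.g.\ a curve along which $E_C$ fails to be stable.

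For $\dim Y = 1$ (Section~\ref{sec:p3q3fib}): since $X$ is rationally connected, $Y\cong\bP_1$, and the general fiber $F$ is a Fano threefold. The restriction formula $\ch_2(T_X)|_F=\ch_2(T_F)$ shows that $\ch_2(T_F)$ is positive on every effective divisor of $F$, which — after checking, e.g., that $\rho(F)=1$ — makes $F$ a $2$-Fano threefold; by Araujo--Castravet~\cite{AC12} this forces $F\cong\bP_3$ or $F\cong Q_3$. Thus $f$ is a $\bP_3$- or a $Q_3$-fibration over $\bP_1$, so $X$ is, away from its degenerate fibers, of the form $\bP(E)$ or a relative quadric in $\bP(E)$ for a vector bundle $E$ on $\bP_1$. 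Writing $E=\bigoplus_i\sO(a_i)$ with $a_1\ge\cdots\ge a_r = 0$ and letting $Z$ be a relative line supported on the most negative summands of $E$, the same normal-bundle computation gives $\ch_2(X).[Z]\le 0$, again contradicting the $2$-Fano property.

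The overall scheme is routine; the real difficulty, and the step I expect to be the main obstacle, is the last one in each of the two fibration cases. One half is structural: knowing that the general fiber is of the stated type, that the fibrations in question are projective (or quadric) bundles away from loci that do not interfere with the surface we construct, and, for $\dim Y = 1$, that $Y\cong\bP_1$ and $\rho(F)=1$ — all of which rests on the theory of extremal contractions and the classification of del Pezzo surfaces and of Fano threefolds. The other half is the explicit normal-bundle computation on a cleverly chosen geometrically ruled surface, and — the most delicate point — the verification that such a $Z$ with $\ch_2(X).[Z]\le 0$ always exists, which is where the choice of the auxiliary curve $C\subset Y$ and of the quotient $Q$ really matters. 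Assembling the three cases shows that $f$ cannot be of fiber type, hence is birational.
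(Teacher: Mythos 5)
Your overall skeleton (split by $\dim Y$, quote Proposition~\ref{prop:conicbund} for $\dim Y=3$, show the general fiber is $2$-Fano hence $\bP_2$, $\bP_3$ or $Q_3$, then produce a surface with $\ch_2\le 0$) is the same as the paper's, but the two fibration cases contain genuine gaps exactly at the points you flag as ``the real difficulty,'' and these are not minor technicalities.

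For $\dim Y=2$, your step ``hence $f$ is a $\bP_2$-fibration, and --- by the structure of such extremal contractions --- a $\bP_2$-bundle'' is unjustified: an extremal contraction of a fourfold onto a surface with general fiber $\bP_2$ can have reducible/singular fibers (these are surfaces inside $X$, so the relative Picard number $1$ does not force irreducibility), and nothing in the general theory upgrades it to a bundle. This is precisely what Section~\ref{sec:p2fib} has to work around: the paper restricts to $X_\ell=f^{-1}(\ell)$ over a free rational curve $\ell\subset Y$ (using smoothness of $Y$ by Ando--Wi\'sniewski and rationality of $Y$ since $X$ is Fano), proves the hard Lemma~\ref{lem:p2fib} --- using the positivity of $\ch_2$ itself, Fujita's del Pezzo fibration classification, and the computation of $\ch_2(Z).F_j$ for each possible divisorial component --- to show every singular fiber is a chain of three $\bF_1$'s reached from a genuine $\bP_2$-bundle $Z_s$ by explicit blow-ups, and then transports the surface of Lemma~\ref{lem:projbundp1} back through those blow-ups controlling $\ch_2$ via \cite[Lem.~4.13]{AC12} (Claim~\ref{cl:line}). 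Without that analysis your rank-$3$ bundle $E_C$ does not exist; moreover your prescription ``choose $C$ along which $E_C$ fails to be stable'' is not guaranteed to be realizable on an arbitrary base curve --- the paper gets the needed destabilizing sub-line bundle for free only because $\ell\cong\bP_1$ and Grothendieck splitting applies.

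For $\dim Y=1$ the $\bP_3$ case is fine in spirit, but the paper must (and does) first rule out degenerate fibers altogether via irreducibility plus \cite[Thm.~5]{ARM12} before writing $X=\bP(E)$; ``away from its degenerate fibers'' is not enough, since a surface living over an affine open has no well-defined intersection with $\ch_2$ without controlling its closure. More seriously, in the $Q_3$ case your ``relative line supported on the most negative summands of $E$'' does not make sense: $X$ is a divisor of relative degree $2$ in $\bP(E)$ ($E$ of rank $5$), so a sub-$\bP_1$-bundle $\bP(Q)\subset\bP(E)$ will not lie in $X$, and the existence of a ruled surface of relative lines in $X$ with computable non-positive $\ch_2$ is exactly what would need proof. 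The paper argues differently: it computes $\ch_2(X).(-mK_X)^2=-\tfrac{15}{2}m^2(\deg\Delta_f+\tfrac45\deg K_C)$ via the geometric quadric bundle structure of \cite[Prop.~21]{Ara09}, and then must still dispose of the residual numerical cases $\deg\Delta_f\in\{0,1\}$ by separate arguments ($E\cong\sO_{\bP_1}^{\oplus5}$ giving $\bP_1\times Q_3$, and an index-$3$ case excluded by \cite[Thm.~1.3]{AC12}). So the conclusion is correct and your strategy points in the right direction, but as written the proof is incomplete at the two decisive steps.
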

Now in the birational case, the situation is more complicated, so we cannot give a complete result. Nevertheless, we do some calculations for blow-ups in section~\ref{sec:bir} and can thus rule out one important case of birational contractions:
\begin{thm}
Let $X$ be a smooth 2-Fano fourfold and $f\colon X \to Y$ a birational contraction of an extremal ray in~$\overline{NE}(X)$. Then $f$ cannot contract a divisor to a point.
\end{thm}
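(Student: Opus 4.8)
The plan is to argue by contradiction: suppose the prime divisor $E$ is contracted by $f$ to a point $p:=f(E)$. Since $E$ is then the whole fibre of $f$ over $p$, it is a Fano threefold with $\sO_E(-E)$ ample, and by the classification of divisorial extremal contractions of smooth fourfolds the morphism $f$ is the (weighted) blow-up of $Y$ at $p$, with $(E,\sO_E(E))$ one of $(\bP_3,\sO(-1))$, $(\bP_3,\sO(-2))$, $(Q_3,\sO(-1))$, together with a few length-one cases that I would treat in the same fashion; moreover $K_X=f^{*}K_Y+aE$ with $a\in\{1,2,3\}$, and $Y$ is smooth exactly in the first case ($a=3$), carrying an isolated terminal singularity at $p$ otherwise.

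Next I would compute $\ch_2(T_X)$. The conormal sequence of $E\subset X$ gives $\ch(T_X|_E)=\ch(T_E)+e^{c_1(\sO_E(E))}$, while the blow-up sequence $0\to T_X\to f^{*}T_Y\to\iota_{*}\mathcal{Q}\to 0$ — with $\mathcal{Q}$ the twisted relative tangent sheaf of the projective bundle $E\to p$ — yields, via Grothendieck--Riemann--Roch, a formula
\[
\ch_2(T_X)=f^{*}\ch_2(T_Y)+\lambda\,E^2,\qquad \lambda\in\mathbb{Q}_{>0},
\]
with $\lambda=\tfrac52$ when $Y$ is smooth (in the singular cases the same shape survives after passing to a resolution or using orbifold Chern characters). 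Since $X$ is Fano, so is $Y$: for every curve $C\subset Y$ one has $-K_Y\cdot C=(-K_X+aE)\cdot\widetilde C>0$ on the strict transform $\widetilde C$, and one concludes by the cone theorem on $Y$. Pairing the displayed formula with the strict transform $\widetilde S$ of a surface $S_Y\subset Y$ through $p$, the projection formula together with $E^2\cdot\widetilde S=-\deg_E(\widetilde S\cap E)$ gives, in the smooth case,
\[
\ch_2(T_X)\cdot\widetilde S=\ch_2(T_Y)\cdot S_Y-\tfrac52\,\mathrm{mult}_p(S_Y),
\]
and applying $-K_X\cdot\widetilde C>0$ to the strict transform of a rational curve $C\ni p$ gives $-K_Y\cdot C>3\,\mathrm{mult}_p(C)$, hence $-K_Y\cdot C\ge 4$ for every rational curve through $p$.

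To finish the smooth case I would invoke bend-and-break: through every point of a Fano fourfold there is a rational curve of anticanonical degree at most $5$, and via a short degeneration argument the bound $-K_Y\cdot C\ge 4$ at $p$ propagates to a general point, so by the Cho--Miyaoka--Shepherd-Barron characterisation $Y\cong\bP_4$ or $Y\cong Q_4$. For these two an explicit computation of $\ch_2(T_X)\cdot\widetilde S$, with $S_Y$ a $2$-plane through $p$, returns $0$ and $-\tfrac32$ respectively, contradicting the positivity of $\ch_2(X)$. The singular cases run on the same principle, but since there $a\in\{1,2\}$ the inequality on curves through $p$ is weaker, so reducing $Y$ to its explicit weighted-projective model requires a more careful study of the rational curves through the singular point $p$; this step, together with running the Chern character computation on a resolution of $Y$, is where I expect the main obstacle to lie, whereas the crucial case $(E,\sO_E(E))=(\bP_3,\sO(-1))$ is already settled by the argument above.
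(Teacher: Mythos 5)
Your strategy (classify the point-contraction $f$ itself, push $\ch_2$ down to $Y$, and bound degrees of rational curves through $p$) is genuinely different from the paper's, which never analyzes $Y$ at all: there one picks a \emph{second} extremal ray $R$ with $D.R>0$, uses Tsukioka/Casagrande to show its contraction $g\colon X\to Z$ is the blow-up of a smooth surface $W$ in a smooth Fano fourfold (the conic-bundle alternative being excluded by the fiber-type results), and then contradicts $2$-Fano-ness via the identity $\ch_2(X).g^{-1}(C)=-\tfrac12\deg N_{W|Z}|_C$ together with a study of $D\cap E$. Your smooth sub-case is plausible: for $(E,\sO_E(E))=(\bP_3,\sO(-1))$ the formula $\ch_2(X)=f^*\ch_2(Y)+\tfrac52E^2$ is correct, $Y$ is Fano, $-K_Y.C\ge 4$ for rational curves through $p$, and the final computations for $\bP_4$ and $Q_4$ ($0$ and $-\tfrac32$) check out — though even here the "short degeneration argument" propagating the bound from the special point $p$ to a general point needs the properness of the space of stable maps of bounded degree spelled out, and the degree-$4$ case needs the quadric characterization (Miyaoka/Dedieu--H\"oring), not just CMSB.

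The genuine gap is everything you defer. By Ando's classification (which the paper uses), the divisor contracted to a point is $\bP_3$, $Q_3$, \emph{or a Del Pezzo threefold} with $\omega_D\cong(A^*)^{\otimes2}$; the Del Pezzo cases are not "a few length-one cases treatable in the same fashion" — they are the bulk of the problem. There $a=1$, so your inequality only gives $-K_Y.C\ge 2$ for rational curves through $p$, far too weak for any $\bP_4/Q_4$-type characterization; the point $p\in Y$ is a cone-type singularity that is in general not a quotient singularity, so "orbifold Chern characters" are unavailable and $\ch_2(T_Y)$ has no meaning, while "passing to a resolution" of $Y$ just reproduces the original problem with worse bookkeeping. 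Even the intermediate cases $(\bP_3,\sO(-2))$ and $(Q_3,\sO(-1))$ are only asserted to "run on the same principle" with $a=2$, where the curve bound ($\ge 3$) again falls short of the characterization theorems you invoke. So, as written, the proposal settles only the easiest sub-case and explicitly leaves open the cases that constitute the actual content of the theorem; the paper's detour through a second extremal contraction and the surface-blow-up lemma is precisely the device that handles those cases uniformly, and your outline contains no substitute for it.
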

\subsection*{Acknowledgments}
I would like to thank Thomas Peternell for bringing the subject at hand to my attention and for many useful discussions and suggestions.

\section{Conic bundles}\label{sec:conic}
Let $Z$, $U$ be smooth varieties and $g\colon Z \to U$ a conic bundle (i.e., a proper morphism such that each fiber of~$g$ is isomorphic to a conic in~$\bP_2$). Then there exists an effective divisor $\Delta_g \subset U$ such that $Z_u := g^{-1}(u)$ is smooth if and only if $u\not\in\Delta_g$.
\begin{lem}\label{lem:conicbund}
In the situation given above, for any smooth proper curve $C\subset U$ such that $S:=g^{-1}(C)$ is smooth, we have
\[ \ch_2(T_Z|_S) = -\tfrac{3}{2} \deg \Delta_g|_C \le 0.\]
\end{lem}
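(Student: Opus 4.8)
The plan is to reduce the assertion to an intersection-theoretic computation on the surface $S$ and then apply Noether's formula to the conic bundle $\pi := g|_S\colon S \to C$, whose discriminant is $\Delta_\pi = \Delta_g|_C$ (locally $g$ is given, inside a $\bP_2$-bundle over $U$, by the vanishing of a symmetric $3\times 3$ matrix $A$ of functions on $U$; restricting $A$ to $C$ identifies $\{\det(A|_C)=0\}$ with $(\det A)|_C$, so the discriminant of $\pi$ equals $\Delta_g|_C$ with multiplicities). Since $Z$, $S$ are smooth and $S=g^{-1}(C)$ has in $Z$ the same codimension that $C$ has in $U$, the subscheme $S$ is a local complete intersection whose conormal sheaf is pulled back from $C$, so $N_{S/Z} \cong \pi^* N_{C/U}$. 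Consequently $\ch_2(N_{S/Z}) = \pi^*\ch_2(N_{C/U}) = 0$, since $N_{C/U}$ is a bundle on a curve; and additivity of $\ch$ along $0 \to T_S \to T_Z|_S \to N_{S/Z} \to 0$ gives $\ch_2(T_Z|_S) = \ch_2(T_S)$. It therefore suffices to prove $\ch_2(T_S) = -\tfrac32\deg\Delta_\pi$.

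Next I would pin down the fibres of $\pi$. A general fibre $F \cong \bP_1$ has $F^2=0$ and, by adjunction, $K_S\cdot F = -2$. A double-line fibre $2L$ would satisfy $F \equiv 2L$, forcing $L^2=0$ and $K_S\cdot L = -1$, which is incompatible with adjunction on the smooth rational curve $L$; so every singular fibre is a pair of $(-1)$-curves meeting transversally at one point. Moreover, a local computation at such a node --- it lies on $S$, so the relevant entry of $A$ must be nonzero to first order in the parameter along $C$ --- shows that $\Delta_\pi$ is reduced, whence $\deg\Delta_\pi = \delta$, the number of singular fibres. Stratifying $C$ into $C\setminus\Delta_\pi$ (over which $\pi$ is a $\bP_1$-bundle) and the $\delta$ singular points (each fibre of topological Euler number $3$) yields $e(S) = 2\,e(C) + \delta$.

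Finally, $\ch_2(T_S) = \tfrac12 c_1^2(T_S) - c_2(T_S) = \tfrac12 K_S^2 - e(S)$, while Noether's formula together with $\pi_*\sO_S = \sO_C$ and $R^1\pi_*\sO_S = 0$ gives $K_S^2 + e(S) = 12\,\chi(\sO_S) = 12\,\chi(\sO_C) = 6\,e(C)$. Eliminating $K_S^2$ and substituting $e(S) = 2\,e(C)+\delta$ gives $\ch_2(T_S) = 6\,\chi(\sO_C) - \tfrac32 e(S) = 3\,e(C) - \tfrac32\bigl(2\,e(C)+\delta\bigr) = -\tfrac32\delta = -\tfrac32\deg\Delta_\pi$, which is $\le 0$ because $\Delta_\pi$ is effective. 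The main obstacle is the geometric input of the second paragraph --- ruling out double-line fibres and, above all, verifying that smoothness of $S$ forces $\Delta_\pi$ to be reduced; once this is settled, the Chern-class and Noether bookkeeping is routine.
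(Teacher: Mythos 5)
Your proof is correct, but it takes a genuinely different route from the paper. Both arguments share the first reduction: $N_{S|Z}\cong(g|_S)^*N_{C|U}$ kills $\ch_2$ of the normal bundle, so $\ch_2(T_Z|_S)=\ch_2(T_S)$ and one may work with the conic bundle surface $\pi\colon S\to C$. From there the paper stays entirely inside Chern-class calculus: it embeds the total space into $\bP(E)$ with $E=g_*\sO(-K)$, reads off $\Delta_g\in\lvert\det E^*-3K_C\rvert$ from the determinant of the defining symmetric map, and computes $\ch_2$ via the de Jong--Starr formula for projective bundles plus adjunction; crucially, this computation uses only the divisor class of $\det$ and is completely insensitive to the fiber geometry or to whether the discriminant is reduced. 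Your route replaces this by Noether's formula and an Euler-number stratification, which is more elementary, but it shifts the whole weight onto a geometric input the paper never needs: that smoothness of $S$ rules out double-line fibers and forces $\Delta_g|_C$ to be reduced, so that $\deg\Delta_g|_C$ equals the number of singular fibers. You correctly identify this as the main obstacle, and your sketch does close it: the parity/adjunction argument excludes $F=2L$, and at a node of a rank-two fiber, writing $q_t=xy+tq_1+O(t^2)$, smoothness of $S$ forces $q_1(\text{node})\neq0$, while $\tfrac{d}{dt}\det A(t)|_{t=0}=\operatorname{tr}\bigl(\operatorname{adj}(A_0)A_1\bigr)$ is (up to a nonzero constant) exactly $q_1$ evaluated at the node, so $\det A|_C$ vanishes to order exactly one there; the same computation shows $C\not\subset\Delta_g$, so $\Delta_g|_C$ is a genuine divisor and your count $\deg\Delta_g|_C=\delta$ matches the scheme structure the paper attaches to $\Delta_g$ via the determinant. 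In short: the paper's proof buys uniformity (no case analysis, valid with arbitrary discriminant multiplicities), while yours buys transparency (Noether plus topology) at the cost of the local reducedness analysis, which you should write out in the adjugate form above if you want the argument to be airtight.
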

\begin{proof}
Since $g$ is flat, we have $N^*_{S|Z}=(g|_S)^*N^*_{C|U}$ and thus $\ch_2(N^*_{S|Z}) = (g|_S)^*\ch_2(N^*_{C|U})$, which is zero because $\dim C=1$.
So from the exact sequence
\[
0 \longrightarrow T_S \longrightarrow T_Z|_S \longrightarrow N_{S|Z} \longrightarrow 0,
\]
we obtain
\[
\ch_2(T_Z|_S) = \ch_2(S).
\]
This shows that we can assume from now on that $U=C$ and $Z=S$.

We now consider the rank-3 vector bundle $E:= g_*\sO_Z(-K_Z)$ and the projective bundle
\[
\pi\colon \bP(E) \to U.
\]
An easy calculation shows that $Z$ can be embedded into~$\bP(E)$ as a divisor
\[
Z \in \lvert \sO_{\bP(E)}(2) + \pi^*(\det E^* -K_U ) \rvert.
\]
Equivalently, $Z$ is given by a section
\[
s \in H^0(S^2E\otimes\det E^*\otimes \sO_U(-K_U)) \subset \Hom(E^*, E\otimes\det E^*\otimes \sO_U(-K_U)).
\]
From this, it follows that
\[
\Delta_g \in \lvert \det E^* -3K_U\rvert.
\]
On the other hand, by \cite[Lem.~4.1]{dJS06}, we have
\[
\ch_2(\bP(E)) = {\tfrac{3}{2}}\xi^2 + \pi^*c_1(E^*).\xi,
\]
where $\xi := c_1(\sO_{\bP(E)}(1))$. It follows that
\[
\begin{split}
\ch_2(Z) &= \bigl(\ch_2(\bP(E)) - \tfrac{1}{2}(2\xi + \pi^*c_1(E^*)+\pi^*c_1(U))^2\bigr)|_Z\\
&= \bigl(-\tfrac{1}{2}\xi^2-\pi^*c_1(E^*).\xi-2\pi^*c_1(U).\xi\bigr)\bigl(2\xi+\pi^*c_1(E^*)+\pi^*c_1(U)\bigr)\\
&= -\tfrac{3}{2}c_1(E^*)-\tfrac{9}{2}c_1(U) = -\tfrac{3}{2} \deg \Delta_g.
\end{split}
\]
Since $Z$ and $U$ are smooth, the general fiber of~$g$ is smooth, so $\Delta_g$ is effective, i.e., $\deg\Delta_g\le 0$.
\end{proof}
\begin{prop}\label{prop:conicbund}
Let $X$ be a smooth projective variety with $n:=\dim X \ge 2$ and $f\colon X \to Y$ the contraction of an extremal ray on~$X$. Suppose $\dim Y = n-1$. Then there exists a smooth surface $S\subset X$ such that $\ch_2(X).S \le 0$.
\end{prop}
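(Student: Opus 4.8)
The plan is to recognize $f$ as a conic bundle and then apply Lemma~\ref{lem:conicbund} to a surface lying over a curve in~$Y$. Since $\dim Y<\dim X$, the contraction~$f$ is of fiber type, and the contracted ray is $K_X$-negative. For an extremal contraction of a smooth $n$-fold onto an $(n-1)$-fold, the structure theorem of Ando (with its higher-dimensional refinements by Andreatta and Wi\'sniewski) tells us that $Y$ is smooth, that every fiber of~$f$ has dimension~$1$, and that $f$ is a conic bundle in the sense used above, with discriminant divisor $\Delta_f\subset Y$.

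Next I would produce the surface. Choose a very ample divisor~$H$ on~$Y$ and $n-2$ general members $H_1,\dots,H_{n-2}\in|H|$. Since $Y$ is smooth, repeated application of Bertini's theorem shows that $C:=H_1\cap\dots\cap H_{n-2}$ is a smooth proper curve in~$Y$ of dimension $(n-1)-(n-2)=1$ (for $n=2$ one simply takes $C=Y$). The linear subsystem of $|f^*H|$ spanned by the divisors $f^*H_i=f^{-1}(H_i)$ is base-point-free, its base locus being the preimage under~$f$ of the (empty) base locus of~$|H|$; hence, again by Bertini, $S:=f^*H_1\cap\dots\cap f^*H_{n-2}=f^{-1}(C)$ is a smooth surface in~$X$, and $g:=f|_S\colon S\to C$ is a conic bundle with discriminant $\Delta_f|_C$.

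Since the second Chern character commutes with restriction, the projection formula gives $\ch_2(X).S=\deg_S\ch_2(T_X|_S)$, and Lemma~\ref{lem:conicbund}, applied with $Z=X$ and $U=Y$, yields $\ch_2(X).S=-\tfrac32\deg\Delta_f|_C\le 0$, as required. The only genuinely non-formal ingredient is the structure theorem invoked in the first step: one must know that the contraction has equidimensional, one-dimensional fibers over a smooth base, with no fiber more degenerate than a plane conic. Granting that, the construction of~$S$ and the final computation are routine.
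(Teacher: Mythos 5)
Your construction of $S$ and the final application of Lemma~\ref{lem:conicbund} match the paper, but the first step contains a genuine gap: the structure theorem you invoke does not say what you need. Ando's theorem (\cite[Thm.~3.1]{And85}) \emph{assumes} that all fibers of the contraction are one-dimensional and then concludes that $f$ is a conic bundle over a smooth base; it does not assert equidimensionality or smoothness of~$Y$. For a fiber-type extremal contraction of a smooth $n$-fold onto an $(n-1)$-dimensional variety with $n\ge 4$, isolated two-dimensional fibers and singular points of~$Y$ can genuinely occur --- this is exactly the phenomenon studied in \cite{Kac97}, which the paper cites. So the blanket claim ``$Y$ is smooth, every fiber of $f$ has dimension~$1$, and $f$ is a conic bundle with discriminant $\Delta_f\subset Y$'' is unjustified (and false in general), and with it your appeal to Lemma~\ref{lem:conicbund} with $Z=X$, $U=Y$ collapses, since that lemma requires $U$ smooth and $g$ a conic bundle over all of~$U$.

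The repair is the one the paper makes, and it is compatible with the rest of your argument: let $A\subset Y$ be the union of the singular locus of~$Y$ and the image of the locus where fibers have dimension $\ge 2$. The latter has codimension $\ge 2$ (otherwise its preimage would have dimension $\ge n$, contradicting that the general fiber is a curve), and the singular locus does too since $Y$ is normal; over $U:=Y\setminus A$ the restriction of~$f$ is equidimensional with smooth base, so Ando's theorem applies there and gives a conic bundle $g\colon f^{-1}(U)\to U$. Since $\operatorname{codim} A\ge 2$, your general complete-intersection curve $C=H_1\cap\dots\cap H_{n-2}$ automatically avoids~$A$, so $S=f^{-1}(C)=g^{-1}(C)$ lies in the conic-bundle locus, and then $\ch_2(X).S=\ch_2(T_{f^{-1}(U)}|_S)\le 0$ by Lemma~\ref{lem:conicbund} exactly as you computed. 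In short: your Bertini construction and the computation are fine, but you must localize over the good open set rather than claim global smoothness and equidimensionality from the structure theory.
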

\begin{proof}
There exists a subvariety $A\subset Y$ of codimension $\ge 2$ such that if we let $U:=Y\setminus A$, then $U$ is smooth and
\[
g := f|_{f^{-1}(U)}\colon f^{-1}(U) \to U
\]
is equidimensional. By~\cite[Thm.~3.1]{And85} (cf.~\cite[Prop.~2.2]{Kac97}), this implies that $g$ is a conic bundle. Now if we take $n-2$ general ample divisors $H_1$, $\dots$, $H_{n-2}$ on $Y$, then $H_1\cap \dots \cap H_{n-2}$ is contained in~$U$ and both $H_1\cap \dots \cap H_{n-2}$ and
\[
S:=g^{-1}(H_1\cap \dots \cap H_{n-2})
\]
are smooth. Then we have $\ch_2(X).S=\ch_2(T_{f^{-1}(U)}|_S)\le 0$ by Lemma~\ref{lem:conicbund}.
\end{proof}

\section{$\bP_2$-fibrations}\label{sec:p2fib}
We now consider extremal contractions
\[
f\colon X \to Y
\]
where $X$ is a 2-Fano fourfold and $Y$ is a surface. We will show in this section that the above situation cannot occur.

We start by observing that a general fiber~$F$ of~$f$ is a smooth surface with $N_{F|X}\cong \sO_{F}^{\oplus2}$, so $-K_F=-K_X|_F$ is ample and $\ch_2(F) = \ch_2(X).S > 0$. In other words, $F$ is 2-Fano, and thus $F \cong \bP_2$.

By~\cite[Cor.~(1.4)]{AW97}, $Y$ is smooth. So, since $X$ is Fano, $Y$ must be a blow-up of either~$\bP_2$ or a Hirzebruch surface. In particular, there exists a base point free linear system on~$Y$ whose general element~$\ell$ is isomorphic to~$\bP_1$.

If we consider the preimage $X_\ell := f^{-1}(\ell)$, then $X_\ell$ is a smooth threefold with the property that $\ch_2(X_\ell).S > 0$ for any $S\in \overline{NE}_2(X_\ell)\setminus\{0\}$.

We now apply the following lemma:
\begin{lem}\label{lem:p2fib}
Let $g\colon Z \to T$ be a surjective morphism from a smooth projective threefold~$Z$ to a smooth curve~$T$.
Assume that
\begin{enumerate}
\item $-K_Z$ is $g$-ample,
\item $\ch_2(Z).S > 0$ for all irreducible surfaces $S\subset Z$ and
\item a general fiber of~$g$ is isomorphic to~$\bP_2$.
\end{enumerate}
Then any singular fiber~$F_{\mathrm{sing}}$ of~$g$ can (as a divisor) be decomposed as
\[
F_{\mathrm{sing}} = F_1 + F_2 + F_3,
\]
where $F_i\cong \bF_1$ for $i=1$, $2$, $3$ and $F_i\cap F_j$ is a line for $i\ne j$ (either a fiber of the ruling or the exceptional section of~$\bF_1$).

More precisely, there is a finite sequence of birational morphisms
\[ Z = Z_0 \stackrel{\alpha_1}{\longrightarrow} Z_1 \stackrel{\alpha_2}{\longrightarrow}
\dots \stackrel{\alpha_s}{\longrightarrow} Z_s,\]
where $Z_s$ is a $\bP_2$-bundle over~$T$ and each~$\alpha_i$ is given as the composition
\[ \alpha_i = \gamma_i\circ\beta_i,\]
where $\beta_i$ is the blow-up of a line in a smooth fiber of~$Z_i$ over $T$ and $\gamma_i$ is the blow-up of a non-trivial fiber of~$\beta_i$.

\end{lem}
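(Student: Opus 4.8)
The plan is to analyze a singular fiber $F_{\mathrm{sing}}$ of $g$ via the structure theory of extremal contractions on the threefold $Z$. First I would note that since $-K_Z$ is $g$-ample and a general fiber is $\bP_2$, the morphism $g$ is an extremal contraction of fiber type in the relative setting, with relative Picard number one. The singular fibers are then constrained: by the general theory of del Pezzo fibrations in dimension three (or by applying Mori theory to $Z$ itself after noting that any extremal ray of $Z$ contracted by $g$ must be a line in a fiber), the singular fibers are reducible only in controlled ways. The key input is condition~(2): if $F_{\mathrm{sing}}$ were irreducible, then $\ch_2(Z).F_{\mathrm{sing}} > 0$, and I would aim to show this forces $F_{\mathrm{sing}}$ to be too close to $\bP_2$ to actually be singular, so $F_{\mathrm{sing}}$ must be reducible.

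Next, for the decomposition itself, I would run a relative MMP over $T$ starting from $Z$. Each step $\alpha_i$ is an extremal contraction over $T$; since the generic fiber is $\bP_2$ and $-K$ is $g$-ample, no step can be of fiber type over $T$ (the generic fiber stays $\bP_2$), so every $\alpha_i$ is birational. The claim is that each $\alpha_i$ is a divisorial contraction that, fiber by fiber over its image, is the blow-up of a line in a $\bP_2$ followed by contracting the proper transform of that line (which has become a $(-1)$-curve inside a $\bF_1$ fiber, i.e.\ a ``non-trivial fiber of $\beta_i$''). To justify this I would classify which divisorial contractions can occur: the exceptional divisor must be mapped into a single fiber of $Z_i \to T$, and the only Mori-type divisorial contractions compatible with having $\bP_2$ as the adjacent smooth fibers and keeping the total space smooth are exactly blow-ups of smooth curves that lie in fibers and project isomorphically or are themselves lines — and a careful case analysis, using that the nearby fibers are $\bP_2$ and the normal bundle computations, pins down that the contracted curve is a line in a smooth $\bP_2$ fiber. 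Iterating, we reach $Z_s$ with no singular fibers, hence a $\bP_2$-bundle over $T$ by the smoothness of all fibers and rigidity of $\bP_2$.

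Finally, to extract the explicit divisor decomposition $F_{\mathrm{sing}} = F_1 + F_2 + F_3$ with each $F_i \cong \bF_1$: I would track what happens to a singular fiber under the sequence of blow-ups and blow-downs. Since $Z_s$ has fiber $\bP_2$ and each $\gamma_i$ (blow-down) versus $\beta_i$ (blow-up) changes a fiber by one elementary modification, the singular fiber of $Z$ lying over a given point of $T$ is obtained from $\bP_2$ by a sequence of such modifications; the resulting reducible surface has components that are each the blow-up of $\bP_2$ at one point, i.e.\ $\bF_1$, and the way they meet (along a line — either a fiber of the ruling or the exceptional section) follows from bookkeeping on how blow-up centers sit relative to previously created exceptional curves. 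The degree-3 count ($F_{\mathrm{sing}}$ has three components) should come from $\ch_2$-positivity: I expect one shows $\ch_2(Z).F_i \le 0$ would fail unless the number of components and their configuration is precisely as stated — but in fact the cleaner route is to use that $F_{\mathrm{sing}}$ is numerically a fiber, compute $\ch_2(Z).F_{\mathrm{sing}} = \ch_2(\bP_2).(\text{line})$-type invariants after modification, and invoke $(2)$ to rule out any configuration other than the three-$\bF_1$ one. I expect the main obstacle to be the classification step: showing that the \emph{only} divisorial extremal contractions that can occur in the relative MMP are the composite ``blow up a line, then contract its proper transform'' — ruling out, e.g., contractions with exceptional divisor $\bP_2$, or blow-ups of a point, or contractions to a non-Gorenstein point — and this requires a somewhat delicate local analysis near the special fiber using the $\bP_2$-structure of neighboring fibers and the hypothesis that $Z$ is smooth.
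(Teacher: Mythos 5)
Your overall strategy (relative Mori theory over $T$ plus $\ch_2$-positivity, ending in a $\bP_2$-bundle) is the same as the paper's, but the step you yourself flag as ``the main obstacle'' --- classifying which divisorial contractions can occur --- is exactly the heart of the proof, and the mechanism you propose for it cannot work. You suggest that a ``delicate local analysis near the special fiber using the $\bP_2$-structure of neighboring fibers and the hypothesis that $Z$ is smooth'' rules out exceptional divisors contracted to points, blow-ups of points, etc. It does not: blowing up a point in a fiber of a $\bP_2$-bundle over $T$ produces a smooth threefold with $-K$ relatively ample and general fiber $\bP_2$ whose singular fiber has a component $\cong\bP_2$ with normal bundle $\sO_{\bP_2}(-1)$, so such configurations genuinely occur for smooth degree-$9$ del Pezzo fibrations; only hypothesis (2), used quantitatively, excludes them. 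The paper's mechanism, absent from your sketch, is: each irreducible component $F_j$ of $F_{\mathrm{sing}}$ contains an extremal curve $C_j$ with $F_j.C_j<0$, hence $F_j$ is the exceptional divisor of a relative Mori contraction; Mori's classification of divisorial contractions on a smooth threefold gives four types, for which one computes $\ch_2(Z).F_j\in\bigl\{\tfrac{3}{2}+\tfrac{1}{2}k^2,\;1,\;1,\;-\tfrac{1}{2}\deg N_{C_j'|Z_j'}\bigr\}$; the budget identity $\sum_j m_j\,\ch_2(Z).F_j=\ch_2(\bP_2)=\tfrac{3}{2}$ with every term positive then leaves only a two-component and the three-component configuration, and the two-component case is killed by a separate geometric argument (after contracting one component the new fiber is irreducible, hence $\cong\bP_2$ by flatness, forcing the remaining component to be $\bP_2$, contradicting its type). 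Your closing suggestion --- ``compute $\ch_2(Z).F_{\mathrm{sing}}$ after modification and invoke (2)'' --- only restates the goal: without realizing each component as a Mori exceptional divisor there is no way to turn the single number $\tfrac{3}{2}$ into a constraint on the individual components.

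Several further needed steps are missing or misstated. Reducibility of a singular fiber is not something you can expect to squeeze out of (2) alone as ``too close to $\bP_2$ to be singular''; the paper quotes a theorem on irreducible fibers of such fibrations. Identifying each component with $\bF_1$ requires an argument (the paper uses $\chi(N_{F_j|Z})=0$ by Kodaira vanishing, $F_j^2.(-K_Z)<0$, Riemann--Roch to get $\chi(\sO_{F_j})>0$, hence $C_j'\cong\bP_1$ and $N_{C_j'|Z_j'}\cong\sO_{\bP_1}(-1)\oplus\sO_{\bP_1}$). To iterate the contraction you must verify that $-K$ remains relatively ample after the first blow-down --- in particular $-K_{Z_1'}.C_1'>0$, which the paper checks via the exceptional section of $F_1$ --- and you must prove that the first contracted curve is a non-trivial fiber of the second blow-up (the paper's computation $F_3'.C_1'=1$, $F_2'.C_1'=-1$, $\psi_2^*K_{Z_2''}.C_1'=0$). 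Finally, your description of the factorization (``blow up a line, then contract the proper transform of that line'') does not match the structure asserted in the statement, which is: blow up a line in a smooth $\bP_2$-fiber, then blow up a one-dimensional fiber of that blow-up; as written your step would be a small (curve) contraction, not a composition of two divisorial modifications. As it stands the proposal is a plausible outline with the decisive arguments left open.
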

\begin{proof}
The result is a corollary of the more general classification of Del~Pezzo fibrations over curves carried out by T.~Fujita in~\cite{Fuj90}. In our special case, the proof is simplified considerably by ruling out most cases using the positivity assumption on the second Chern character as we will do in the following.

We let $F_{\mathrm{sing}}$ be a singular fiber of~$g$. Then by~\cite[Thm.~5]{ARM12}, $F_{\mathrm{sing}}$ is reducible. So, as a divisor, we can decompose $F_{\mathrm{sing}}$ as
\[
F_{\mathrm{sing}} = \sum_{j=1}^r m_jF_j,
\]
where $r\ge2$, the $F_j$ are the irreducible components of~$F_{\mathrm{sing}}$ and $m_j\ge1$. Then we have the following
\begin{lem}\label{lem:compextr}
For any irreducible component $F_j$ of $F_{\mathrm{sing}}$, there exists an irreducible curve $C_j\subset F_j$ with $C_j.F_j < 0$ whose numerical class~$[C_j]$ is extremal in~$\overline{NE}(Z/T)$.
\end{lem}
\begin{proof}
Since the fiber $F_{\mathrm{sing}}$ is connected, the irreducible component~$F_j$ meets another irreducible component~$F_k$ with $k\ne j$. Let $\tilde{C}\subset F_j$ be a curve meeting $F_k$ but not contained in any $F_i$ for $i\ne j$. Then clearly
\[
F_k.\tilde{C} > 0\qquad\text{and}\qquad F_i . \tilde{C} \ge 0\quad\text{for $i\ne j$}.
\]
Since $F_{\mathrm{sing}}.\tilde{C} = 0$, this implies
\[
F_j.\tilde{C} < 0.
\]
Now since $-K_Z$ is $g$-ample, by the relative cone theorem, $\overline{NE}(Z/T)$ is generated by the classes of finitely many irreducible extremal curves, so in particular, we can write
\[
\tilde{C} \equiv \sum_{l=1}^m \alpha_l \tilde{C}_l,
\]
where the $\tilde{C}_l$ are irreducible extremal curves and $\alpha_l >0$.
From $F_j.\tilde{C} < 0$, it then follows that there exists an $l\in\{1,\dots, m\}$ such that $F_j.\tilde{C}_l < 0$ and thus $C_j:=\tilde{C}_l \subset F_j$.
\end{proof}

By the relative contraction theorem, there exists for each~$j$ a relative Mori contraction of~$g$ over~$T$ contracting the $C_j\subset F_j$ constructed in the preceding lemma, i.e., a Mori contraction $\phi_j\colon Z \to Z'_j$, where $Z'_j$ is a normal projective variety, and a morphism $g'_j\colon Z'_j \to T$ such that $g_j'\circ \phi_j = g$.

If $\phi_j$ contracts any curve contained in a general fiber of~$g$, then $\phi_j = g$, thus $g$ is itself a Mori contraction and thus a $\bP_2$-bundle by~\cite[Thm.~3.5]{Mor82}. This is a contradiction to the reducibility of~$F_{\mathrm{sing}}$, so $\phi_j$ must be birational.

Thus, by~\cite[Thm.~3.3]{Mor82}, there is an irreducible divisor~$D_j$ on~$Z$ contained in a fiber of~$g$ such that $\phi_j$ contracts~$D_j$ and $\phi_j$ is an isomorphism on~$Z\setminus D_j$. For any $i\ne j$, any curve $C'\subset F_i$ with $C'\not\subset F_j$ satisfies $C'.F_j\ge 0$, so $C'$ is not numerically proportional to~$C_j$. In particular, $\phi_j$ cannot contract $F_i$. Thus we have shown that $D_j = F_j$, so again by~\cite[Thm.~3.3]{Mor82}, we obtain the following types of possible fiber components~$F_j$:
\begin{enumerate}
\item[(T1)] $F_j \cong \bP_2$ with normal bundle $N_{F_j|Z}\cong \sO_{\bP_2}(-k)$, $k\in\{1,2\}$,
\item[(T2)] $F_j \cong \bP_1\times\bP_1$ with $N_{F_j|Z} \cong \sO_{\bP_1\times\bP_1}(-1,-1)$,
\item[(T3)] $F_j \cong Q$, where $Q\subset\bP_3$ is a quadratic cone and $N_{F_j|Z}\cong \sO_{\bP_3}(-1)|_Q$,
\item[(T4)] $Z_j'$ is smooth and there exists a smooth curve $C_j'\subset Z_j'$ such that $\phi_j$ is the blow-up of~$C_j'$. Thus, $F_j \cong \bP(N_{C_j'|Z_j'}^*)$ and $N_{F_j|Z}\cong \sO_{\bP(N_{C_j'|Z_j'}^*)}(-1)$.
\end{enumerate}
We further note that in types (T1), (T2) and (T3), the fiber component~$F_j$ is mapped to a point by the contraction morphism~$\phi_j$.

By straightforward computations, we can obtain the intersection number $\ch_2(Z).F_j$ in each of these cases:
\begin{enumerate}
\item[(T1)] $\ch_2(Z).F_j = \tfrac{3}{2} + \tfrac{1}{2}k^2$,
\item[(T2)] $\ch_2(Z).F_j = 1$,
\item[(T3)] $\ch_2(Z).F_j = 1$,
\item[(T4)] $\ch_2(Z).F_j = -\tfrac{1}{2} \deg N_{C_j'|Z_j'}$.
\end{enumerate}
By assumption, we have $\ch_2(Z).F_j>0$ for all~$j$. Furthermore, since
$\ch_2(Z).F_{\mathrm{sing}} = \ch_2(\bP_2) = \tfrac{3}{2}$, we have
\[
\sum_{j=1}^r m_i \ch_2(Z).F_j = \tfrac{3}{2}.
\]
Combining this with the above calculations, we see that only the following possibilities remain (up to re-numbering the~$F_j$):
\begin{enumerate}
\item[(P1)] $r=2$, $m_1 = 1$, $F_1$ is of type~(T4) with $\deg N_{C_1'|Z_1'} = -1$, and $m_2 \in \{1, 2\}$ with $F_2$ of type (T2), (T3) or (T4).
\item[(P2)] $r=3$, $m_1 = m_2 = m_3 = 1$, and each $F_j$ is of type~(T4) with $\deg N_{C_j'|Z_j'} = -1$.
\end{enumerate}
We now show that possibility~(P1) cannot occur. We assume that a fiber~$F_{\mathrm{sing}}$ of~$g$ satisfies~(P1). We consider the contraction~$\phi_1\colon Z\to Z_1'$ constructed above and let $F_{\mathrm{sing}}'=\phi_1(F_{\mathrm{sing}})$. We know that $Z_1'$ is smooth, which implies that $g_1'$ is flat. Since $F_{\mathrm{sing}}'$ is irreducible, we can apply~\cite[Thm.~5]{ARM12} to conclude that $F_{\mathrm{sing}}'\cong\bP_2$. But now $\phi_1$ is the blow-up of a smooth curve $C_1'\subset F_{\mathrm{sing}}'$, so in particular, $F_2$, which is the strict transform of $F_{\mathrm{sing}}'$ in~$Z$, is isomorphic to~$F_{\mathrm{sing}}'\cong\bP_2$, contradicting (T2), (T3) and (T4).

It remains to study possibility~(P2) in greater detail. We let $F_{\mathrm{sing}}=F_1+F_2+F_3$ be a fiber of~$g$ that satisfies~(P2). We first note that since $-K_Z$ is $g$-ample, we have
\[ F_i.F_j.(-K_Z) \ge 0\quad\text{for $i\ne j$} \]
and since $F$ is connected, we can for each~$j$ even find an $i_0\ne j$ such that
\[ F_{i_0}.F_j.(-K_Z) > 0. \]
Together with $F_j.(F_1+F_2+F_3) = 0$, this implies that
\begin{equation}\label{eq:negselfint}
F_j^2.(-K_Z) < 0 \quad\text{for $j=1$, $2$, $3$.}
\end{equation}

By applying Kodaira vanishing and using~(T4) and the adjunction formula, we obtain
\begin{equation}\label{eq:vaneuler}
\chi(N_{F_j|Z}) = \chi(K_{F_j} - K_Z|_{F_j}) = 0.
\end{equation}
Riemann-Roch yields
\[ \chi(N_{F_j|Z}) = \tfrac{1}{2}(F_j|_{F_j})^2-\tfrac{1}{2}K_{F_j}.F_j|_{F_j} + \chi(\sO_{F_j}) = \tfrac{1}{2}F_j^2.(-K_Z) + \chi(\sO_{F_j}).\]
Applying \eqref{eq:negselfint} and \eqref{eq:vaneuler}, we obtain
\[ \chi(\sO_{F_j}) > 0. \]
Since obviously $h^2(\sO_{F_j}) = h^0(K_{F_j}) = 0$, we conclude that $h^1(\sO_{F_j}) = 0$ and thus the curve $C_j'\subset Z_j'$ from~(T4) must be isomorphic to~$\bP_1$.

We now let $\tilde{F}_j=\phi_j(F_{\mathrm{sing}})$. Then we have the following normal bundle sequence:
\begin{equation}\label{eq:normbund}
0 \longrightarrow N_{C_j'|\tilde{F}_j} \longrightarrow N_{C_j'|Z_j'} \longrightarrow N_{\tilde{F}_j|Z_j'}|_{C_j'} \longrightarrow 0.
\end{equation}
But now $N_{\tilde{F}_j|Z_j'}$ is trivial, so from $\deg N_{C_j'|Z_j'}=-1$, we infer that $N_{C_j'|\tilde{F}_j}\cong\sO_{\bP_1}(-1)$ and hence
\[ N_{C_j'|Z_j'} \cong \sO_{\bP_1}(-1) \oplus \sO_{\bP_1}. \]
So in particular, $F_j \cong \bF_1$.

We consider again the contraction $\phi_1\colon Z \to Z_1'$, which is the blow-up of a smooth curve $C_1'\subset Z_1'$. If we let $F_2'$ and $F_3'$ be the images of $F_2$ and $F_3$ via $\phi_1$, respectively, we can assume that $C_1'\subset F_2'$. Then $F_2$ is mapped isomorphically onto $F_2'$ by~$\phi_1$.

We now want to apply Lemma~\ref{lem:compextr} to~$Z_1'$ in order to find a contraction morphism $\psi_2\colon Z_1' \to Z_2''$ which contracts~$F_2'$. In order to do this, we have to show that $-K_{Z_1'}$ is $g_1'$-ample. Since $\overline{NE}(Z/T)$ is generated by finitely many curves, the same is true for~$\overline{NE}(Z_1'/T)$. So it suffices to show that $-K_{Z_1'}.C > 0$ for any curve $C$ contained in a fiber of~$g_1'$. But now since $\phi_1$ is the blow-up of~$C_1'$, we have
\[ K_Z = \phi_1^*K_{Z_1'} + F_1, \]
so $-K_{Z_1'}.C > 0$ is certainly true for all irreducible $C \ne C_1'$ since such $C$ can be obtained as the image of a curve $\tilde{C}\subset Z$ with $\tilde{C} \not\subset F_1$. But now also $-K_{Z_1'}.C_1' > 0$, which one can see as follows: Let $C_e \subset Z$ be the exceptional section of $F_1 \to C_1'$. Then $\phi_{1*}C_e = C_1'$ and $K_{F_1}|_{C_e} \cong \sO_{\bP_1}(-1)$, hence by adjunction
\[ F_1.C_e = -K_Z.C_e - 1 \ge 0 \]
and thus $-K_{Z_1'}.C_1' > 0$ as in the previous case.

So by Lemma~\ref{lem:compextr}, we obtain a relative contraction morphism $\psi_2\colon Z_1' \to Z_2''$ contracting $F_2'$ as desired. We let $g_2''\colon Z_2'' \to T$ be the induced morphism. Since the exceptional divisor of $\psi_2$ is $F_2' \cong \bF_1$ we conclude by~\cite[Theorem~3.3]{Mor82} that $Z_2''$ is smooth and $\psi_2$ is the blow-up of a smooth curve~$C_2''\subset Z_2''$. By the same argument as in the discussion of possibility~(P1) above, we see that the fiber of $g_2''$ over $g(F_{\mathrm{sing}})\in T$ must be isomorphic to~$\bP_2$. If we consider the normal bundle sequence for $C_2''\subset \psi_2(\phi_1(F_{\mathrm{sing}})) \subset Z_2''$ (cf.~sequence~\eqref{eq:normbund}), we can conclude from $\bF_1 \cong F_2' \cong \bP(N_{C_2''|Z_2''})$ that $C_2''$ must be a line in~$\bP_2$.

We conclude by showing that $C_1'$ must be a fiber of $F_2'\to C_2''$. We first note that by adjunction, we have
\[
K_{Z_1'}.C_1' = \deg K_{C_1'} - \deg N_{C_1'|Z_1'} = -1.
\]
Furthermore, obviously $F_3'=\phi_1(F_3)$ is the strict transform of $\psi_2(F_3')\cong \bP_2$, so $F_3'\cong\bP_2$. Since $F_3\cong\bF_1$ is the blow-up of $F_3'$ in $C_1'\cap F_3'$, we conclude
that $F_3'.C_1'=1$ and thus $F_2'.C_1'=-1$ since $(F_2'+F_3').C_1'=0$.
Finally, since $K_{Z_1'} = \psi_2^*K_{Z_2''}+F_2'$, it follows that $\psi_2^*K_{Z_2''}.C_1' = 0$, from which we conclude that $\psi_2(C_1')$ must be a point since $-K_{Z_2''}$ is obviously $g_2''$-ample.
\end{proof}

We now apply Lemma~\ref{lem:p2fib} to~$X_\ell$. We get a finite sequence
\[ X_\ell = Z_0 \stackrel{\alpha_1}{\longrightarrow} Z_1 \stackrel{\alpha_2}{\longrightarrow}
\dots \stackrel{\alpha_s}{\longrightarrow} Z_s\]
of birational morphisms as described in the lemma, where $Z_s \to \ell\cong\bP_1$ is a $\bP_2$-bundle. We now apply to~$Z_s$ the following lemma:

\begin{lem}\label{lem:projbundp1}
Let $E$ be a vector bundle on~$\bP_1$ of rank~$r\ge 2$. Then there exists a rank-$2$ quotient bundle $F$ of $E$ such that the surface
\[ S := \bP(F) \subset \bP(E) \]
satisfies $\ch_2(\bP(E)).S \le 0$.
\end{lem}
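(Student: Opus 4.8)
The plan is to write $E = \bigoplus_{i=1}^r \sO_{\bP_1}(a_i)$ with $a_1 \ge a_2 \ge \dots \ge a_r$ (possible since every vector bundle on $\bP_1$ splits) and take $F$ to be the quotient $\sO_{\bP_1}(a_1) \oplus \sO_{\bP_1}(a_2)$, so that $S = \bP(F)$ is a Hirzebruch surface sitting inside $\bP(E)$ as the zero locus of the tautological surjection onto $\sO_{\bP_1}(a_3)\oplus\dots\oplus\sO_{\bP_1}(a_r)$. Then I would compute $\ch_2(\bP(E)).S$ using the formula $\ch_2(\bP(E)) = \tfrac{3}{2}\xi^2 + \pi^* c_1(E^*).\xi$ from \cite[Lem.~4.1]{dJS06} that was already quoted in the proof of Lemma~\ref{lem:conicbund}. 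Wait---that formula was stated for a rank-$3$ bundle; in general $\ch_2(\bP(E)) = \tfrac{r}{2}\xi^2 + \pi^*c_1(E^*).\xi$ (this follows from $\ch(T_{\bP(E)/\bP_1})$ via the relative Euler sequence, or one can reduce to the needed case), so I would first record the correct general expression.

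The key computational step is then to restrict to $S$. On $S = \bP(F)$ the class $\xi|_S$ is the tautological class $\xi_F$ of $\bP(F)$, and $S$ as a cycle in $\bP(E)$ equals $c_{r-2}$ of the quotient bundle $\pi^*(\sO(a_3)\oplus\dots\oplus\sO(a_r))\otimes\sO_{\bP(E)}(1)$; concretely its class is $\prod_{i=3}^r(\xi + \pi^*c_1(\sO(a_i)))$, but for intersection with the $2$-cycle it is cleaner to just use projection formula: $\ch_2(\bP(E)).S = \pi_*\bigl(\ch_2(\bP(E)).[S]\bigr)$ computed on $\bP_1$ after pushing down. Carrying this out, $\xi^3.[S]$ and $\xi^2.\pi^*c_1(E^*).[S]$ reduce to Segre-class computations on $\bP_1$ and yield an expression of the form $\ch_2(\bP(E)).S = \tfrac{r}{2}(a_1 + a_2) - \tfrac{r-2}{2}(a_1+a_2) + (\text{something}) \cdot (a_3+\dots+a_r) - c_1(E)$; I expect everything to collapse to a nonpositive multiple of $(a_1 - a_3) + (a_2 - a_3) + \dots$ or, more plausibly, to exactly $-\tfrac{1}{2}\bigl((a_1 - a_r) + (a_2 - a_r) + \dots + (a_{r-1}-a_r)\bigr)$-type nonnegative-difference sum, hence $\le 0$ precisely because of the ordering $a_1 \ge \dots \ge a_r$. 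The main obstacle will be getting this reduction right---keeping track of the twist by $\det E^*$ implicit in the $\pi^* c_1(E^*).\xi$ term and the Segre classes of the non-split rank-$(r-2)$ quotient---rather than any conceptual difficulty.

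An alternative, cleaner route that avoids heavy Segre-class bookkeeping: note that $S = \bP(F)$ is itself a conic-bundle-free $\bP_1$-bundle over $\bP_1$, so $\ch_2(\bP(E)).S = \ch_2(T_{\bP(E)}|_S)$, and from the normal bundle sequence $0 \to T_S \to T_{\bP(E)}|_S \to N_{S|\bP(E)} \to 0$ together with $\ch_2(S) = 0$ (a $\bP_1$-bundle over a curve has $\ch_2 = 0$ by the same argument as in Lemma~\ref{lem:conicbund}, since $\ch_2$ of a rank-$2$ bundle on a surface fibered in $\bP_1$'s over a curve... actually $\ch_2(S)$ need not vanish, so this needs care), one gets $\ch_2(\bP(E)).S = \ch_2(S) + \ch_2(N_{S|\bP(E)})$. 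Since $N_{S|\bP(E)} \cong \pi^*(\sO(a_3)\oplus\dots\oplus\sO(a_r))\otimes\sO_{\bP(E)}(1)|_S$ has rank $r-2$, both terms are explicitly computable on the Hirzebruch surface $S = \bF_{a_1 - a_2}$, and I would choose whichever of the two of $F$'s summands makes the resulting intersection number come out $\le 0$; picking the \emph{top} two summands $a_1, a_2$ should make $N_{S|\bP(E)}$ "as negative as possible" and force the bound. I expect to need to optimize over which rank-$2$ quotient to take, and verifying that the best choice always gives a nonpositive answer is the crux.
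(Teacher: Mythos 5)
There is a genuine gap, and it lies exactly at the step you yourself flag as the crux: the choice of the rank-$2$ quotient. With your ordering $a_1\ge a_2\ge\dots\ge a_r$ and $F=\sO_{\bP_1}(a_1)\oplus\sO_{\bP_1}(a_2)$ (the two \emph{largest} summands), the inequality fails. Two things go wrong. First, your normal bundle formula has a sign error: if $K=\sO_{\bP_1}(a_3)\oplus\dots\oplus\sO_{\bP_1}(a_r)$ is the kernel of $E\twoheadrightarrow F$, then $S=\bP(F)$ is the zero locus of the induced map $\pi^*K\to\sO_{\bP(E)}(1)$, so $N_{S|\bP(E)}\cong\sO_{\bP(E)}(1)|_S\otimes\pi^*(K^*)$, i.e.\ the twist is by $\sO_{\bP_1}(-a_3)\oplus\dots\oplus\sO_{\bP_1}(-a_r)$, not by $K$ itself. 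Second, with the correct formula and $\zeta:=\sO_{\bP(E)}(1)|_S$ (so $\zeta^2=\deg F=a_1+a_2$ and $\ch_2(T_S)=0$), the tangent/normal sequence gives
\[
\ch_2(\bP(E)).S=\tfrac{1}{2}(r-2)(a_1+a_2)-(a_3+\dots+a_r),
\]
which for your choice of the top two summands is $\ge 0$ and generally strictly positive: for $E=\sO_{\bP_1}\oplus\sO_{\bP_1}\oplus\sO_{\bP_1}(1)$ your $F=\sO_{\bP_1}(1)\oplus\sO_{\bP_1}$ gives $\ch_2(\bP(E)).S=\tfrac12>0$, while the quotient onto $\sO_{\bP_1}\oplus\sO_{\bP_1}$ gives $-1$. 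So your heuristic that the top two summands make the normal bundle ``as negative as possible'' is exactly backwards.

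The paper's proof is your second route (normal bundle sequence plus $\ch_2(T_S)=0$, which does hold for any $\bP_1$-bundle over a curve by \cite[Prop.~4.3]{dJS06}, so your hesitation there was unnecessary), but it orders $a_1\le\dots\le a_r$ and projects onto the two summands of \emph{smallest} degree, making $S$ the analogue of the negative section of a Hirzebruch surface; then $\tfrac12(a_1+a_2)\le a_i$ for every $i\ge3$ immediately gives $\tfrac{1}{2}(r-2)(a_1+a_2)-(a_3+\dots+a_r)\le0$. Your first, Segre-class sketch is too vague to assess and its guessed final formula is not what the computation produces; in any case the proposal as written does not establish the lemma because the decisive choice of $F$ and the sign of the twist are both wrong, and the ``optimization over quotients'' is never actually carried out.
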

\begin{proof}
By Grothendieck, there exist integers $a_1 \le \dotsb \le a_r$ such that
\[ E \cong \sO_{\bP_1}(a_1) \oplus \dotsb \oplus \sO_{\bP_1}(a_r). \]
We consider the quotient
\[ E \twoheadrightarrow \sO_{\bP_1}(a_1) \oplus \sO_{\bP_1}(a_2) =: F \]
given by the projection onto the first two summands of~$E$. Then we have a natural inclusion $S:=\bP(F) \subset \bP(E)$ and if we let $\zeta := \sO_{\bP(E)}(1)|_{\bP(F)}$, the normal bundle is given by
\[ N_{S|\bP(E)} \cong \zeta \otimes \pi^*{E^*/F^*} \cong \zeta \otimes \pi^*\bigl(\sO_{\bP_1}(-a_3) \oplus \dotsb \oplus \sO_{\bP_1}(-a_r)\bigr),\]
where $\pi\colon S=\bP(F) \to \bP_1$ denotes the natural projection. We now consider the sequence
\[ 0 \longrightarrow T_S \longrightarrow T_{\bP(E)}|_S \longrightarrow N_{S|\bP(E)} \longrightarrow 0. \]
Since $F$ has rank~$2$, we have $\ch_2(T_S) = 0$ by~\cite[Prop.~4.3]{dJS06}, so we obtain from the above sequence:
\[ \begin{split}
\ch_2(\bP(E)).S &= \ch_2(N_{S|\bP(E)})\\
&= \tfrac{1}{2}\zeta^2\cdot(r-2) + \zeta.\pi^*\sO(-a_3-\dotsb-a_r)\\
&= \tfrac{1}{2}(r-2)(a_1+a_2)-a_3-\dotsb-a_r \le 0 \qedhere
\end{split} \]
\end{proof}

We let $S_s := S\subset Z_s$ be the surface constructed in Lemma~\ref{lem:projbundp1}. We denote by $S_k$, $k=0$, $\dotsc$, $s-1$, the strict transform of~$S$ inside~$Z_k$.
\begin{claim}\label{cl:line}
For any $k=0$, $\dotsc$, $s$, the intersection of $S_k$ with any smooth fiber of~$Z_k$ over~$\ell$ is a line in~$\bP_2$. Furthermore, $\ch_2(Z_k).S_k \le 0$.
\end{claim}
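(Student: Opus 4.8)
The plan is to prove Claim~\ref{cl:line} by descending induction on~$k$, starting from $k = s$ where both assertions hold essentially by construction: the surface $S_s = \bP(F)$ meets each fiber of the $\bP_2$-bundle $Z_s \to \ell$ in a line $\bP(F_t) \subset \bP(E_t) \cong \bP_2$, and $\ch_2(Z_s).S_s \le 0$ is exactly the conclusion of Lemma~\ref{lem:projbundp1}. So it suffices to show that if the claim holds for $Z_k$ then it holds for $Z_{k-1}$, where $\alpha_k = \gamma_k \circ \beta_k \colon Z_{k-1} \to Z_k$ factors as the blow-up $\beta_k$ of a line $\ell_k$ in a smooth fiber of $Z_k$ over~$\ell$, followed by the blow-up $\gamma_k$ of a non-trivial fiber of $\beta_k$ (so $\alpha_k$ contracts a singular fiber of $Z_{k-1} \to \ell$ onto a smooth $\bP_2$-fiber of $Z_k \to \ell$, in the manner dictated by Lemma~\ref{lem:p2fib}).

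First I would deal with the fiber-intersection statement. A smooth fiber of $Z_{k-1} \to \ell$ maps isomorphically onto a smooth fiber of $Z_k \to \ell$ away from the center of $\alpha_k$, and since $\alpha_k$ only modifies the fiber over one point $t_0 = g(F_{\mathrm{sing}})$, every smooth fiber of $Z_{k-1}$ lying over $t \ne t_0$ is untouched and meets $S_{k-1}$ in the same line as the corresponding fiber of $Z_k$ meets $S_k$. The only fiber to worry about is the exceptional (singular) fiber $F_{\mathrm{sing}} = F_1 + F_2 + F_3$ over $t_0$, but this is \emph{not} a smooth fiber, so it is excluded from the assertion; hence the fiber-intersection part passes to $Z_{k-1}$ with no further work. (I should however record, for the Chern character computation, how $S_{k-1}$ meets the exceptional components $F_i$: since $S_k$ meets the fiber over $t_0$ in a line and $S_{k-1}$ is the strict transform, $S_{k-1}$ meets the exceptional locus of $\alpha_k$ in a curve that maps to that line, so in particular $S_{k-1}.C \ge 0$ for curves $C$ in the exceptional divisor unless $S_{k-1}$ happens to contain $C$.)

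The substance of the proof is the inequality $\ch_2(Z_{k-1}).S_{k-1} \le 0$. Here I would use the standard blow-up formulas. Writing $\mu \colon Z_{k-1} \to Z_k$ for $\alpha_k$ with exceptional divisor $E = E_\beta + E_\gamma$ (the total transforms of the two blown-up centers) and strict transform $S_{k-1}$ of $S_k$, one has $S_{k-1} = \mu^* S_k - \sum_i a_i E_i$ where $a_i$ is the multiplicity of $S_k$ along the $i$-th center, and, by \cite[Lem.~4.1]{dJS06} or a direct Grothendieck--Riemann--Roch computation for a blow-up along a smooth center, $\ch_2(Z_{k-1}) = \mu^*\ch_2(Z_k) + (\text{correction supported on }E)$. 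Intersecting $\ch_2(Z_{k-1})$ with $S_{k-1}$ and using the projection formula $\mu^*\ch_2(Z_k).\mu^*S_k = \ch_2(Z_k).S_k \le 0$ (the inductive hypothesis), the task reduces to checking that all the correction terms — products of $E_i$ with $\mu^*S_k$ and with each other — contribute non-positively. Since each center is either a line in a $\bP_2$-fiber or a fiber of a ruled surface (hence $\cong \bP_1$ with computable normal bundle, as worked out in the proof of Lemma~\ref{lem:p2fib}), and since $S_k$ meets such a center in $0$ or $1$ point transversally (because $S_k$ restricted to the relevant fiber is a line, meeting another line in at most one point), the multiplicities $a_i$ are $0$ or $1$ and all the intersection numbers in the correction are explicit small integers.

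The main obstacle I anticipate is precisely this last bookkeeping: one must pin down the position of $S_{k-1}$ relative to the three components $F_1, F_2, F_3$ of the singular fiber and relative to the two blown-up centers inside $Z_k$, and then verify that the blow-up correction to $\ch_2 \cdot S$ is $\le 0$ in every configuration. The cleanest way is probably to argue directly on $Z_{k-1}$: restrict $\ch_2(Z_{k-1})$ to the singular fiber and use $\ch_2(Z_{k-1}).F_{\mathrm{sing}} = \ch_2(\bP_2) = \tfrac32$ together with the type-(T4) data $\ch_2(Z_{k-1}).F_i = \tfrac12$, plus the fact (from the fiber-intersection claim, since $S_{k-1}$ restricted to the general fiber is a line of self-intersection computable against $-K$) that $S_{k-1}$ behaves numerically like a line-bundle section. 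Comparing $\ch_2(Z_{k-1}).S_{k-1}$ with $\ch_2(Z_k).S_k$ then comes down to adjunction and the negativity $F_i^2.(-K_Z) < 0$ established in~\eqref{eq:negselfint}, which forces the extra terms to have the right sign. In effect, each blow-up can only decrease $\ch_2 \cdot S$, so the inequality propagates all the way down to $k = 0$, giving $\ch_2(X_\ell).S_0 \le 0$ and hence a contradiction with $X_\ell$ being $2$-Fano.
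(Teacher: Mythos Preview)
Your strategy---descending induction on~$k$ with the base case $k=s$ coming from Lemma~\ref{lem:projbundp1}, then tracking $\ch_2\cdot S$ through the two successive blow-ups $\beta_k$ and $\gamma_k$---is exactly what the paper does, and your argument that the line-intersection assertion passes down automatically (because $\alpha_k$ is an isomorphism away from the single singular fiber) is correct. That assertion is then \emph{used} in the $\ch_2$ step: it tells you how $S_k$ meets the smooth $\bP_2$-fiber containing the center~$L_1$.

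The gap is that you do not actually carry out the $\ch_2$ computation; both of your suggested routes stop at the level of intentions. The paper's argument is a concrete case split via the blow-up formulas of \cite[Lem.~4.13]{AC12}. If $L_1\not\subset S_k$, the line-intersection hypothesis forces $L_1\cap S_k$ to be a single transverse point, so \cite[Lem.~4.13(i)]{AC12} gives $\ch_2(\tilde Z_k).\tilde S_k=\ch_2(Z_k).S_k-\tfrac32$; then either $L_2\not\subset\tilde S_k$ (further correction $\le 0$ by the same lemma) or $L_2\subset\tilde S_k$ is the exceptional $(-1)$-curve just created, and \cite[Lem.~4.13(ii)]{AC12} gives a further correction of $-\tfrac32+1=-\tfrac12$. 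If instead $L_1\subset S_k$, then $L_1$ is an entire fiber of $S_k\to\ell$, $\beta_k$ maps $\tilde S_k$ isomorphically onto $S_k$ (so $L_2\not\subset\tilde S_k$), and the two corrections are $-1$ and $\le 0$. Your appeal to ``adjunction and the negativity $F_i^2.(-K_Z)<0$'' in~\eqref{eq:negselfint} does not substitute for these explicit numbers: that inequality concerns self-intersections of fiber components, not the position of $S$ relative to the blow-up centers. The slogan ``each blow-up can only decrease $\ch_2\cdot S$'' is precisely the statement that has to be checked, case by case, and its truth genuinely depends on whether the center lies in~$S$ and, if so, on its self-intersection there.
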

To prove Claim~\ref{cl:line}, we argue by induction over~$k$. The case $k=s$ follows from Lemma~\ref{lem:projbundp1}. Now we assume by induction that Claim~\ref{cl:line} is true for some~$k\ge1$. We consider the birational morphism $\alpha_k\colon Z_{k-1}\to Z_k$, which by Lemma~\ref{lem:p2fib} can be decomposed as
\[ \alpha_k = \gamma_k\circ\beta_k, \]
where $\beta_k\colon \tilde{Z}_k \to Z_k$ is the blow-up of a line $L_1$ contained in a smooth fiber of~$Z_k$ over~$\ell$ and $\gamma_k\colon Z_{k-1}\to\tilde{Z}_k$ is the blow-up of a one-dimensional fiber $L_2\cong\bP_1$ of~$\beta_k$. We let $\tilde{S}_k$ be the strict transform of $S_k$ in~$\tilde{Z}_k$ and consider different cases.

The first case is that $L_1 \not\subset S_k$. Then $L_1 \cap S_k$ is a simple point by the induction hypothesis, so in particular $\tilde{S}_k$ is the blow-up of~$S_k$ in the point~$L_1 \cap S_k$. Furthermore, by~\cite[Lem.~4.13(i)]{AC12},
\[ \ch_2(\tilde{Z}_k).\tilde{S}_k = \ch_2(Z_k).S_k-\tfrac{3}{2} < 0. \]
Now if we blow up $L_2$ in this case, we either have $L_2 \not\subset\tilde{S}_k$, whence
\[ \ch_2(Z_{k-1}).S_{k-1} \le \ch_2(\tilde{Z}_k).\tilde{S}_k < 0 \]
by~\cite[Lem.~4.13(i)]{AC12}, or we have $L_2 \subset \tilde{S}_k$. But then $L_2$ is a $(-1)$-curve in~$\tilde{S}_k$ by the above considerations, so
\[ \ch_2(Z_{k-1}).S_{k-1} = \ch_2(\tilde{Z}_k).\tilde{S}_k - \tfrac{3}{2} + 1 < 0 \]
by~\cite[Lem.~4.13(ii)]{AC12}.

It remains to consider the case $L_1\subset S_k$. Then $L_1$ is a fiber of~$S_k$ over~$\ell$, so by~\cite[Lem.~4.13(ii)]{AC12},
\[
\ch_2(\tilde{Z}_k).\tilde{S}_k = \ch_2(Z_k).S_k - 1 < 0.
\]
But now since $L_1\subset S_k$, the blow-up morphism~$\beta_k$ maps $\tilde{S}_k$ isomorphically onto~$S_k$, so in particular $L_2 \not\subset \tilde{S}_k$. This implies by~\cite[Lem.~4.13(i)]{AC12} that
\[ \ch_2(Z_{k-1}).S_{k-1} \le \ch_2(\tilde{Z}_k).\tilde{S}_k < 0. \]
Thus we have inductively constructed an irreducible (smooth) surface $S \subset X_\ell = Z_0$ with $\ch_2(X_\ell).S \le 0$ (even $\ch_2(X_\ell).S < 0$ if $X_\ell$ is not a $\bP_2$-bundle). Now since $N_{X_\ell|X}=(f|_{X_\ell})^*N_{\ell|Y}$, we have $\ch_2(N_{X_\ell|X})=0$, so it follows that
\[ \ch_2(X).S \le 0, \]
therefore $X$ is not $2$-Fano.

\section{$\bP_3$- and $Q_3$-fibrations}\label{sec:p3q3fib}
In this section we deal with the case of a smooth projective fourfold~$X$ admitting an extremal contraction
\[f\colon X\to C\]
over a curve~$C$. Then if $X$ is 2-Fano, a general fiber~$F$ of~$f$ must also be 2-Fano (cf.~the beginning of section~\ref{sec:p2fib}), so either $F\cong\bP_3$ or $F\cong Q_3$ (where $Q_3 \subset \bP_4$ is a smooth quadric hypersurface) by~\cite[Thm.~1.2]{AC12}. In the first case we easily obtain:
\begin{prop}
In the above situation, assume that the general fiber of~$f$ is $\bP_3$. Then $X$ is not 2-Fano.
\end{prop}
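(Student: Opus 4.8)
The plan is to mimic the strategy used in the $\bP_2$-fibration case: reduce to a threefold by restricting over a curve, then use a $\bP_3$-bundle structure and a surface computation analogous to Lemma~\ref{lem:projbundp1}. Since $C$ is already a curve, no further restriction is needed to cut down the base; instead the key is that a $\bP_3$-fibration over a curve with the Chern-character positivity assumption should actually be a $\bP_3$-bundle. First I would observe that a general fiber being $\bP_3$ forces, via the relative cone theorem and a classification of the possible degenerate fibers (as in Lemma~\ref{lem:p2fib}, appealing to Fujita's classification of Del~Pezzo fibrations or Mori's classification of extremal contractions of threefolds), that every singular fiber contributes a component with $\ch_2\le 0$, contradicting assumption~(ii); hence $f$ has no singular fibers and is an honest $\bP_3$-bundle $\bP(E)\to C$ for some rank-$4$ bundle $E$ on $C$.

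\medskip

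Next I would apply the analogue of Lemma~\ref{lem:projbundp1} for rank-$4$ bundles on $\bP_1$: writing $E\cong\bigoplus_{i=1}^4\sO_{\bP_1}(a_i)$ with $a_1\le a_2\le a_3\le a_4$, one takes the rank-$2$ quotient $F:=\sO_{\bP_1}(a_1)\oplus\sO_{\bP_1}(a_2)$ and the surface $S:=\bP(F)\subset\bP(E)$. Using that $\ch_2(S)=0$ because $F$ has rank $2$, and the normal bundle $N_{S|\bP(E)}\cong\zeta\otimes\pi^*(\sO_{\bP_1}(-a_3)\oplus\sO_{\bP_1}(-a_4))$, the same Chern-character computation from the exact sequence
\[
0 \longrightarrow T_S \longrightarrow T_{\bP(E)}|_S \longrightarrow N_{S|\bP(E)} \longrightarrow 0
\]
yields $\ch_2(\bP(E)).S = (a_1+a_2) - a_3 - a_4 \le 0$ (the $\tfrac12(r-2)$ factor now being $1$). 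Since $N_{X|X}$ is pulled back from $C$ and $C$ is a curve, $\ch_2(N_{X|X})=0$, whence $\ch_2(X).S=\ch_2(T_X|_S).S\le 0$, contradicting the 2-Fano hypothesis.

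\medskip

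The main obstacle I expect is the first step: ruling out singular fibers. In the $\bP_2$ case this required the rather involved Lemma~\ref{lem:p2fib}, which went through a careful analysis of the types (T1)--(T4) of extremal contraction and the two surviving combinatorial possibilities (P1), (P2). For $\bP_3$-fibrations the corresponding classification of relative extremal contractions of a smooth threefold over a curve is again available (Mori's classification applies to the relative setting), and the same kind of argument should work: each irreducible component $F_j$ of a singular fiber admits an extremal curve with negative self-intersection, giving a relative Mori contraction, so $F_j$ is one of the standard types, and in each type a direct computation of $\ch_2(Z).F_j$ together with the constraint $\sum m_j\ch_2(Z).F_j = \ch_2(\bP_3).S = 2$ eliminates all configurations. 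One should check whether the bookkeeping is genuinely simpler here — the total $2$ is larger than $\tfrac32$, so a priori there could be more cases — but since any fiber of fiber type forces $f$ itself to be the $\bP_3$-bundle contraction, the only real work is again in the birational components, and the positivity of $\ch_2$ should once more leave no room. If a full repetition of this analysis is undesirable, an alternative is to cite directly the structure theory of Fano threefolds fibered in $\bP_3$ over a curve and note that such a fibration with no $\ch_2$-negative surface must be a projective bundle.
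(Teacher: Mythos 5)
Your second step is fine: once $f$ is known to be a $\bP_3$-bundle $\bP(E)\to C$ (and $C\cong\bP_1$, which you should note follows from $X$ being Fano, so that the Grothendieck splitting applies), the rank-$4$ analogue of Lemma~\ref{lem:projbundp1} gives a surface $S=\bP(\sO(a_1)\oplus\sO(a_2))$ with $\ch_2(\bP(E)).S=(a_1+a_2)-a_3-a_4\le 0$, which is a correct explicit substitute for the paper's citation of \cite[Cor.~4.6]{dJS06} (the remark about $N_{X|X}$ is superfluous here, since there is no ambient fourfold to restrict from).

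The gap is in your first step, the reduction to a $\bP_3$-bundle. The tools you invoke do not apply: the total space is a \emph{fourfold} fibered over a curve, so Mori's classification of extremal contractions of threefolds and Fujita's theory of Del~Pezzo fibrations (both of which concern threefolds, and were applicable in Lemma~\ref{lem:p2fib} only because there $Z=X_\ell$ was a threefold) are not available, and your numerical constraint $\sum m_j\,\ch_2(X).F_j=\ch_2(\bP_3)=2$ is not even well formed, since on a fourfold $\ch_2(X).F_j$ is a $1$-cycle class, not a number. More importantly, no such case analysis is needed, and this is where the paper's argument is much shorter: unlike in Section~\ref{sec:p2fib}, where restricting over a curve $\ell\subset Y$ destroys control of the relative cone, here $f$ is \emph{itself} the contraction of an extremal ray, so $\rho(X/C)=1$. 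Consequently every fiber is irreducible (a component $F_j$ of a reducible fiber would carry a curve with $F_j.C_j<0$, whereas every $f$-contracted curve class is proportional to that of a line in a general fiber, on which $F_j$ has intersection $0$), and then \cite[Thm.~5]{ARM12} --- the same result already used in Section~\ref{sec:p2fib} --- says that an irreducible fiber of a fibration with general fiber $\bP_3$ is isomorphic to $\bP_3$, so $f$ is a $\bP_3$-bundle. In particular the $2$-Fano hypothesis is not needed at all to exclude degenerate fibers; it only enters through the final surface computation.
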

\begin{proof}
Since the relative Picard number of~$f$ is $1$, every fiber of~$f$ is irreducible. By~\cite[Thm.~5]{ARM12}, this implies that $f$ is a $\bP^3$-bundle. Since $C$ is a curve, there exists a rank-4 vector bundle on~$C$ such that $X\cong \bP(E)$. The proposition now follows from~\cite[Cor.~4.6]{dJS06}.
\end{proof}

It remains to consider the case where $F\cong Q_3$:
\begin{prop}
In the above situation, assume that the general fiber of~$f$ is $Q_3$. Then $X$ is not $2$-Fano.
\end{prop}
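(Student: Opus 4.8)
The plan is to mimic the structure of the $\bP_2$-fibration argument in section~\ref{sec:p2fib}, but now for a three-dimensional fiber. First I would slice: since $C$ is a curve we cannot cut down further by pulling back divisors from the base, so instead I would intersect $X$ with sufficiently general members of a very ample linear system on $X$ to produce a smooth surface $S$ inside a single fiber $F$, or more precisely inside $X$, such that $S$ dominates $C$ is \emph{not} what we want — rather, I would fix a very general fiber $F\cong Q_3$ and take $S = F\cap H_1\cap H_2$ for general very ample $H_i$, so that $S$ is a smooth surface section of $Q_3$, i.e. a (possibly degree-shifted) quadric surface or a del Pezzo-type surface, and compute $\ch_2(X).S$. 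The key point is that for $S\subset F$ with $F$ a fiber one has $N_{S|X}$ fitting into $0\to N_{S|F}\to N_{S|X}\to N_{F|X}|_S\to 0$ with $N_{F|X}$ trivial, hence $\ch_2(N_{F|X}|_S)=0$, so $\ch_2(X).S = \ch_2(T_F|_S)$, and then the surface $S$ lives entirely inside $Q_3$ where $T_{Q_3}$ is completely explicit; a direct Chern-character computation on a complete intersection surface in $Q_3\subset\bP_4$ should give a manifestly positive number, so general fibers alone do not obstruct 2-Fanoness. This means the real content, as in the $\bP_2$ case, must come from the \emph{singular} fibers.

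So the heart of the proof will be an analysis of the singular fibers of $f\colon X\to C$ in the $Q_3$ case, in complete analogy with Lemma~\ref{lem:p2fib}. I would argue as follows. Because the relative Picard number is $1$ but individual fibers can still degenerate, I would invoke the classification of (relative) del Pezzo / quadric fibrations of fourfolds over curves — the analogue of Fujita's and the Arzhantsev--Reid-type results already cited — to enumerate the possible singular fibers $F_{\mathrm{sing}}$ of a $Q_3$-fibration, together with the structure of the induced relative contractions. For each such fiber I would, just as in the table (T1)--(T4), run a curve $\tilde C$ inside one component, use the relative cone/contraction theorem to extract an extremal curve $C_j$ with $C_j.F_j<0$, apply the relevant smooth-blow-up classification (Mori's \cite{Mor82} in dimension $4$, i.e. divisorial contractions of a smooth fourfold) to pin down each component $F_j$ and its normal bundle, and finally compute $\ch_2(X).F_j$ in each case. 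The decomposition $F_{\mathrm{sing}}=\sum m_j F_j$ is numerically constrained by $\sum m_j\,\ch_2(X).F_j = \ch_2(Q_3).[\text{line})] $-type identity — concretely $\ch_2(X).F_{\mathrm{sing}}=\ch_2(Q_3)$ evaluated on the appropriate $2$-cycle class, a fixed positive rational number — so combining this identity with the positivity hypothesis $\ch_2(X).F_j>0$ for all $j$ should kill all but a short list of configurations, and then an inductive "blow down the fiber one piece at a time" argument (again exactly parallel to the treatment of (P1), (P2) and to Claim~\ref{cl:line}) reaches a contradiction with the reducibility/degeneration, leaving only the $Q_3$-bundle case.

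In that remaining case $f$ is a quadric bundle $X\to C$ with $X$ smooth, which I would handle by the same device used for $\bP_3$-bundles: embed $X$ as a divisor $X\in|\sO_{\bP(E)}(2)\otimes\pi^*L|$ in a $\bP_4$-bundle $\bP(E)\to C$ for a suitable rank-$5$ bundle $E$ and line bundle $L$ on $C$, use the formula $\ch_2(\bP(E))=\tfrac52\xi^2+\pi^*c_1(E^*).\xi$ from \cite[Lem.~4.1]{dJS06} together with the adjunction exact sequence $0\to T_X\to T_{\bP(E)}|_X\to N_{X|\bP(E)}\to 0$, and — after choosing inside $X$ a suitably positioned surface, e.g. the restriction to $X$ of a $\bP(F)$ for a rank-$3$ quotient $F$ of $E$, in the spirit of Lemma~\ref{lem:projbundp1} — produce a surface $S\subset X$ with $\ch_2(X).S\le 0$. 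The main obstacle I anticipate is exactly the singular-fiber bookkeeping: unlike the $\bP_2$ threefold case, a $Q_3$-fibration of a fourfold admits genuinely more degeneration types (e.g. the quadric can drop rank by one or two, giving reducible fibers with components that are quadric cones, $\bP^1\times\bP^2$'s, or blow-ups thereof), so producing a clean finite list of $(F_j, N_{F_j|X})$ and verifying $\ch_2(X).F_j$ for each, while keeping track of the multiplicities $m_j$ and the incidence/normal-bundle relations among components during the successive blow-downs, is where the real work and the risk of case explosion lie; the positivity of $\ch_2$ is the lever that I expect to collapse the list, but making the combinatorics fully rigorous is the delicate part.
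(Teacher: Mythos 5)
Your plan misjudges where the difficulty lies, and the step you single out as the heart of the proof is both unnecessary and beside the actual obstruction. Since $f\colon X\to C$ is itself the contraction of an extremal ray, the relative Picard number is $1$, so \emph{every} fiber of~$f$ is irreducible: there are no reducible singular fibers to decompose, no components $F_j$ with $F_j.C_j<0$, and no relative blow-downs over~$C$ to perform, so the entire Lemma~\ref{lem:p2fib}-style bookkeeping you propose is vacuous in this situation. (That analysis was needed in section~\ref{sec:p2fib} only because there the threefold $X_\ell\to\ell$ is \emph{not} an extremal contraction, so its fibers can break up.) Conversely, what you cannot do is reduce to a smooth quadric bundle: irreducible \emph{singular} quadric fibers cannot be excluded and must be carried along. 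The input the paper uses instead is \cite[Prop.~21]{Ara09}: irreducibility of all fibers already implies that $f$ is a geometric quadric bundle, i.e.\ $X$ embeds into a $\bP_4$-bundle $\bP(E)$ as a divisor of relative degree~$2$, with all degenerations encoded in a discriminant divisor $\Delta_f\in\lvert 2\det E+5L\rvert$. So the argument goes straight to a global computation on $\bP(E)$, with no fiber-by-fiber case analysis at all; your proposal contains no mechanism that would ever terminate the ``case explosion'' you worry about, because the cases you are trying to enumerate do not arise, while the degenerations that do arise (rank drop of the quadric) are not eliminated by any of your steps.

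The second gap is in your concluding step. You hope that restricting a $\bP(F)\subset\bP(E)$ for a rank-$3$ quotient $F$ yields a surface with $\ch_2(X).S\le 0$, but you neither carry out the computation nor anticipate that such a single test need not close the argument. The paper's analogous computation, with $S=H_1\cap H_2$ for general $H_1,H_2\in\lvert -mK_X\rvert$, gives $\ch_2(X).S=-\tfrac{15}{2}m^2\bigl(\deg\Delta_f+\tfrac{4}{5}\deg K_C\bigr)$, which is \emph{positive} precisely when $C\cong\bP_1$ and $\deg\Delta_f\in\{0,1\}$; these residual cases require genuinely different arguments: for $\deg\Delta_f=0$ a vanishing argument for sections of $\xi\otimes\pi^*\sO_{\bP_1}(-1)$ forces $E\cong\sO_{\bP_1}^{\oplus5}$, hence $X\cong\bP_1\times Q_3$, which is not $2$-Fano, and for $\deg\Delta_f=1$ one gets $-K_X=3\xi|_X$, so $X$ has index~$3$ and is excluded by \cite[Thm.~1.3]{AC12}. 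Your sketch has no counterpart to either of these steps (nor a verification that $\bP(F)\cap X$ is even a well-behaved surface), so even granting the quadric-bundle structure the proof as proposed is incomplete.
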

\begin{proof}
Since the relative Picard number of~$f$ is $1$, every fiber of~$f$ is irreducible. By \cite[Prop.~21]{Ara09}, this implies that $f$ is a \emph{geometric quadric bundle,} i.e., there exists a rank-5 bundle $E$ on~$C$ such that $X$ embeds into~$\bP(E)$ as a divisor of relative degree~$2$. Now if we assume $X$ to be $2$-Fano, $-K_X$ is ample, so for $m \gg 0$, if we take two general divisors $H_1$, $H_2\in\lvert -mK_X \rvert$, the intersection
\[
S:= H_1\cap H_2
\]
is a smooth surface.

We will now calculate $\ch_2(X).S$. We denote by $\pi\colon \bP(E) \to C$ the natural projection and let $\xi := \sO_{\bP(E)}(1)$. Then
\[
-K_{\bP(E)} = 5\xi+\pi^*(\det E^* -  K_C)
\]
and
\[
\ch_2(\bP(E)) = \tfrac{5}{2}\xi^2+\pi^*c_1(E^*).\xi
\]
(cf.~\cite[Lem.~4.1]{dJS06}).

Since $f$ is a geometric quadric bundle, there exists a line bundle $L$ on~$C$ such that
\[
X \in \lvert 2\xi + \pi^* L \rvert.
\]
In particular,
\[
-K_X = \bigl(3\xi+\pi^*(\det E^* -  L - K_C)\bigr)|_X
\]
and
\[
\begin{split}
\ch_2(X) &= \bigl(\tfrac{5}{2}\xi^2+\pi^*c_1(E^*).\xi-\tfrac{1}{2}(2\xi+\pi^*L)^2\bigr)|_X\\
&= \bigl(\tfrac{1}{2}\xi^2+\pi^*(c_1(E^*)-2L).\xi\bigr)|_X.
\end{split}
\]
We thus obtain
\[
\begin{split}
\ch_2(X).S &= \ch_2(X).(-mK_X)^2\\
&= m^2\Bigl(
\bigl(\tfrac{1}{2}\xi^2+\pi^*(c_1(E^*)-2L).\xi\bigr).\bigl(3\xi+\pi^*(c_1(E^*) -  L - K_C)\bigr)^2
\Bigr)|_X\\
&= m^2
\bigl(
\tfrac{9}{2}\xi^4 +\pi^*(12c_1(E^*)-21L-3K_C).\xi^3
\bigr).(2\xi+\pi^*L)\\
&= m^2\bigl(9\xi^5 + \pi^*(24c_1(E^*)-\tfrac{75}{2}L-6K_C).\xi^4\bigr)\\
&= m^2(15\deg E^* -\tfrac{75}{2}\deg L - 6 \deg K_C).
\end{split}
\]
Since $X$ is given by a section
\[
s\in H^0(S^2E\otimes L)\subset \Hom(E^*,E\otimes L),
\]
the discriminant locus of~$f$ is a divisor
\[
\Delta_f\in \lvert 2\det E+5 L\rvert.
\]
We can thus rewrite the result of the above calculation as
\[
\ch_2(X).S=-\tfrac{15}{2}m^2(\deg \Delta_f+\tfrac{4}{5}\deg K_C).
\]
We now argue by contradiction: We assume that $\ch_2(X).S >0$. Then since $\Delta_f$ is effective, we must have $\deg K_C <0$, i.e., $C\cong \bP_1$, and so
\[
\deg\Delta_f\in\{0,1\}.
\]
By tensorizing $E$ with a suitable line bundle, we can assume
\[
0\le \deg E \le 4.
\]
In the case $\deg\Delta_f=0$, we thus obtain $\deg E = \deg L = 0$. We first show that the line bundle
\[
\xi \otimes \pi^*\sO_{\bP_1}(-1)
\]
cannot have a section: Suppose the contrary, then we get an effective Divisor
\[
D \in \lvert \xi - \pi^*\sO_{\bP_1}(1) \rvert
\]
on $X$. Intersecting $D$ with a general member of~$\lvert -mK_X \rvert$ gives a surface in~$X$ and we have
\[
\begin{split}
\ch_2(X).D.(-mK_X) &= m\xi^3.(\xi-\pi^*\sO_{\bP_1}(1)).(3\xi+\pi^*\sO_{\bP_1}(2))\\
&= m\bigl(3\xi^5 - \pi^*\sO_{\bP_1}(1).\xi^4\bigr) = -m,
\end{split}
\]
which contradicts $X$ being $2$-Fano. So $H^0\bigl((\xi\otimes\pi^*\sO_{\bP_1}(-1))|_X\bigr)=0$ and from the exact sequence
\[
0 \longrightarrow \xi^*\otimes\pi^*\sO_{\bP_1}(-1) \longrightarrow \xi\otimes\pi^*\sO_{\bP_1}(-1)  \longrightarrow (\xi\otimes\pi^*\sO_{\bP_1}(-1))|_X \longrightarrow 0
\]
we conclude that also $H^0(\xi\otimes\pi^*\sO_{\bP_1}(-1))=0$. But this means that
\[
H^0(E\otimes\sO_{\bP_1}(-1)) = 0.
\]
Since $E$ splits as a direct sum of line bundles, we conclude that $E\cong \sO_{\bP_1}^{\oplus 5}$ and thus $X\cong \bP_1 \times Q_3$ is not $2$-Fano.

In the case $\deg\Delta_f = 1$, we have $\deg E = 3$ and $\deg L = -1$, so we obtain
\[
-K_X = 3 \xi|_X,
\]
thus $X$ has index~$3$, so $X$ is not $2$-Fano by~\cite[Thm.~1.3]{AC12}.
\end{proof}

\section{The birational case}\label{sec:bir}
In this section we study the case of a birational extremal contraction
\[
f\colon X \to Y,
\]
where $X$ is a $2$-Fano fourfold. Unfortunately, we do not have a general classification result for this situation. We start by studying the blow-up of a smooth surface inside a smooth fourfold:
\begin{lem}\label{lem:blowupsurf}
Let $p \colon X \to Z$ be the blow-up of a smooth fourfold~$Z$ along a smooth surface $W \subset Z$. Then, for any smooth curve $C \subset W$, the smooth surface $S:= p^{-1}(C) \subset X$ satisfies
\[
\ch_2(X).S = - \tfrac{1}{2}\deg N_{W|Z}|_C.
\]
\end{lem}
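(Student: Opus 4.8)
The plan is to compute $\ch_2(X).S$ directly by analyzing the geometry of the surface $S = p^{-1}(C)$ inside the blow-up $X$. Let $E \subset X$ be the exceptional divisor of $p$, so $E = \bP(N^*_{W|Z})$ with projection $q\colon E \to W$; then $S = q^{-1}(C)$ is a $\bP_1$-bundle over $C$, in particular a smooth ruled surface, and $S \subset E$. First I would use the standard exact sequence
\[
0 \longrightarrow T_S \longrightarrow T_X|_S \longrightarrow N_{S|X} \longrightarrow 0
\]
to get $\ch_2(X).S = \ch_2(S) + \ch_2(N_{S|X}) + c_1(S).c_1(N_{S|X})$, evaluated on $S$. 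Since $S$ is a surface, $\ch_2$ of a line bundle on $S$ is just $\tfrac12 c_1^2$, and $\ch_2(S) = \tfrac12(c_1^2(S) - 2c_2(S)) = \tfrac12 K_S^2 - \chi_{\mathrm{top}}(S)$; for a $\bP_1$-bundle over a curve of genus $g$ this is a known quantity (namely $K_S^2 = 8(1-g)$ and $\chi_{\mathrm{top}} = 4(1-g)$, so $\ch_2(S) = 0$). More conceptually, since $S$ is itself a projectivized rank-$2$ bundle over a curve, $\ch_2(T_S) = 0$ by \cite[Prop.~4.3]{dJS06}, exactly as was used in the proof of Lemma~\ref{lem:projbundp1}.

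So the computation reduces to understanding $N_{S|X}$ as a line bundle on $S$ — but $S$ has Picard rank $2$, so I actually need $c_1^2(N_{S|X})$ together with the cross term $c_1(S).c_1(N_{S|X})$, or equivalently I can just compute $\ch_2(X).S$ via intersection theory on $X$ directly. The second approach is cleaner: I would write $\ch_2(X)$ in terms of the blow-up data. Using $K_X = p^*K_Z + E$ and the blow-up formula for the tangent bundle (or equivalently the relation $\ch(X) = p^*\ch(Z) - j_*(\text{something involving the excess bundle on }E)$), one gets an expression for $\ch_2(X)$ as $p^*\ch_2(Z)$ corrected by a class supported on $E$. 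Concretely, there is the exact sequence $0 \to T_X \to p^*T_Z \to j_*Q \to 0$ where $j\colon E \hookrightarrow X$ and $Q$ is a rank-$1$ sheaf on $E$ related to $N_{W|Z}$ and the relative tangent bundle of $E/W$; this is the standard ``blow-up along a smooth center'' tangent sequence. From it, $\ch(X) = p^*\ch(Z) - j_*(\ch(Q)\cdot\mathrm{td}(N_{E|X})^{-1})$, and extracting the degree-$2$ part gives $\ch_2(X)$ modulo classes supported away from $E$. Since $S \subset E$, only the $E$-supported correction contributes beyond the pullback term, and $p^*\ch_2(Z).S = \ch_2(Z).p_*S = \ch_2(Z).0 = 0$ because $p_*S = 0$ ($S$ is contracted to the curve $C$, which has codimension $\ge 2$... more precisely $p(S) = C$ is a curve, and $S$ is a surface mapping onto it, so as a cycle $p_*[S]$ vanishes in the relevant degree). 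Thus everything is localized on $E$.

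Now on $E = \bP(N^*_{W|Z})$ I would restrict to $S = \bP(N^*_{W|Z}|_C)$ and carry out the intersection-theoretic bookkeeping. With $\zeta = c_1(\sO_E(1))$ (so $\sO_E(-E)|_E = \sO_E(1)$, the tautological class), $S$ is cut out on $E$ by the pullback from $W$ of the class $[C]$, i.e.\ $[S] = q^*[C]$ in $E$, and $N_{S|E} = q^*N_{C|W}|_S$. Then $N_{S|X}$ sits in $0 \to N_{S|E} \to N_{S|X} \to N_{E|X}|_S \to 0$ with $N_{E|X}|_S = \sO_S(-\zeta|_S)$, so $c_1(N_{S|X}) = q^*c_1(N_{C|W}) - \zeta|_S$, a computation entirely in terms of $\zeta|_S$, a fiber class, and $\deg N_{C|W}$. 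Plugging into $\ch_2(X).S = \tfrac12 c_1^2(N_{S|X}) + c_1(S).c_1(N_{S|X})$ and using $c_1(S) = -K_S$ from the ruled-surface formula (expressed via $\zeta|_S$, the fiber, and $\deg N_{W|Z}|_C$, since $N_{W|Z}|_C$ is what determines $\sO_E(1)|_S$ up to twist), the only surviving term should be $-\tfrac12 \deg N_{W|Z}|_C$, all other contributions ($\deg N_{C|W}$, the genus of $C$, etc.) cancelling — which is a good consistency check on the sign and coefficient. The main obstacle will be the careful normalization of the tautological class on $E$ and keeping track of which normal bundle ($N_{W|Z}$ versus $N^*_{W|Z}$) enters $\sO_E(1)$, since an error there flips the sign; beyond that it is a routine Chern-class computation on a ruled surface, so I would set up the notation once, carefully, and then grind it out.
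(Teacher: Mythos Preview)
Your outline follows the paper's route almost exactly --- tangent/normal sequence, $\ch_2(S)=0$ for a $\bP_1$-bundle over a curve, then split $N_{S|X}$ via $0\to N_{S|E}\to N_{S|X}\to N_{E|X}|_S\to 0$ --- but there is a genuine error that would derail the computation. You write
\[
\ch_2(T_X|_S) \;=\; \ch_2(T_S) + \ch_2(N_{S|X}) + c_1(T_S)\cdot c_1(N_{S|X}),
\]
and later ``$\ch_2(X).S = \tfrac12 c_1^2(N_{S|X}) + c_1(S).c_1(N_{S|X})$''. The cross term does not belong: the Chern \emph{character} is additive on short exact sequences, so $\ch_2(T_X|_S) = \ch_2(T_S) + \ch_2(N_{S|X})$, period. (You are presumably thinking of the Whitney formula for $c_2$, which does have a cross term.) With your extra term, the quantities you promise will cancel --- $\deg N_{C|W}$, the genus of $C$ --- in fact do not; a quick check gives $c_1(S)\cdot c_1(N_{S|X}) = 2\deg N_{C|W} + \deg N_{W|Z}|_C + 2g(C)-2$, which is nonzero in general. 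Relatedly, $N_{S|X}$ has rank~$2$ (not rank~$1$: $S$ has codimension~$2$ in~$X$), so $\ch_2(N_{S|X}) \ne \tfrac12 c_1^2(N_{S|X})$; the right way to compute it is exactly the filtration you already wrote down, using additivity again.

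Once the cross term is removed, your argument \emph{is} the paper's: $\ch_2(N_{S|E})=0$ because $N_{S|E}$ is pulled back from the curve $C$, and $\ch_2(N_{E|X}|_S)=\tfrac12(\sO_E(-1)|_S)^2 = -\tfrac12\deg N_{W|Z}|_C$. The detour you sketch through the blow-up formula for $\ch_2(X)$ and Grothendieck--Riemann--Roch is correct in principle but unnecessary; the normal-bundle filtration already localizes everything to $E$ without any push-forward bookkeeping.
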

\begin{proof}
Since $S$ is a $\bP_1$-bundle, we have $\ch_2(S)=0$ by~\cite[Prop.~4.3]{dJS06}.
The sequence
\[
0 \longrightarrow T_S \longrightarrow T_{X}|_S \longrightarrow N_{S|X} \longrightarrow 0.
\]
then implies that
\[ \ch_2(X).S = \ch_2(N_{S|X}). \]
Now if we denote by~$E$ the exceptional divisor of~$p$, we have an exact sequence
\[
0 \longrightarrow N_{S|E} \longrightarrow N_{S|X} \longrightarrow N_{E|X}|_S \longrightarrow 0.
\]
Since $N_{S|E} = (p|_S)^*N_{C|W}$, we have $\ch_2(N_{S|E}) = 0$, so it remains to calculate $\ch_2(N_{E|X}|_S)$. To this end, we observe that $N_{E|X} = \sO_E(-1)$, where $\sO_E(1)$ is the tautological line bundle on ~$E\cong \bP(N_{W|Z}^*)$. Thus we obtain
\[
\ch_2(N_{E|X}|_S) = \tfrac{1}{2} (\sO_E(-1))^2.((p|_E)^*\sO_W(C)) = \tfrac{1}{2} c_1(N_{W|Z}^*).c_1(\sO_W(C)).\qedhere
\]
\end{proof}

We can use this Lemma to obtain the following result:
\begin{prop}
Let $f\colon X \to Y$ be a birational divisorial extremal contraction which maps its exceptional divisor $D\subset X$ to a point in~$Y$. Then $X$ is not $2$-Fano.
\end{prop}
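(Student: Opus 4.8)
\medskip
\noindent\emph{Proof proposal (plan).} The plan is to run through the classification of extremal divisorial contractions $f\colon X\to Y$ from a smooth fourfold with $\dim f(D)=0$ and, in each of the finitely many cases, to exhibit a smooth surface $S\subset X$ with $\ch_2(X).S\le 0$. By that classification (adjunction gives $K_D=(a+1)(D|_D)$ with $a\ge 1$ the discrepancy, and the full list is due to Andreatta--Wiśniewski, Ando and Takagi), the reduced divisor $D$ together with $\sO_D(-D)$ is one of $(\bP_3,\sO(1))$ with $a=3$, where $f$ is the blow-up of a smooth point; $(\bP_3,\sO(2))$ with $a=1$; $(Q_3,\sO(1))$ with $Q_3\subset\bP_4$ a smooth quadric and $a=2$; or $(Q_3^{0},\sO(1))$ with $Q_3^{0}\subset\bP_4$ a quadric cone. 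In every case $f$ is the blow-up of the point $p:=f(D)$, which is a smooth point of $Y$ only in the first case.

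\medskip
First I would observe that surfaces contained in $D$ never suffice: for a smooth surface $S\subset D$ the sequences $0\to T_S\to T_D|_S\to N_{S|D}\to 0$ and $0\to T_D|_S\to T_X|_S\to N_{D|X}|_S\to 0$ give $\ch_2(X).S=\ch_2(D).S+\ch_2(N_{D|X}|_S)$; since $D$ is $\bP_3$ or a quadric threefold, $\ch_2(D)$ is a positive multiple of (the restriction of) the square of the hyperplane class, while $N_{D|X}|_S$ is a negative multiple of the hyperplane class on $S$, so $\ch_2(N_{D|X}|_S)=\tfrac12 c_1^2\ge 0$; hence $\ch_2(X).S>0$. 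So one must use surfaces meeting $D$ properly, which I would obtain as strict transforms of surfaces from $Y$. For that I would compute the blow-up correction: working over $X\setminus D\cong Y\setminus\{p\}$ and analysing the part supported on $D$ yields, in $H^4(X;\mathbb{Q})$,
\[
\ch_2(X)=f^{*}\ch_2(Y)-\tfrac52\,[\Pi],
\]
where $[\Pi]$ is the class of a plane $\bP_2\cong\Pi\subset D$ (in the singular cases $f^{*}\ch_2(Y)$ and $[\Pi]$ are to be read on a suitable resolution of $Y$ near $p$ -- the cyclic double cover for $(\bP_3,\sO(2))$, the blow-up itself in the quadric cases -- which only alters the numerical constant). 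Since $[\widetilde T]=f^{*}[T]-\mu\,[\Pi]$ for a surface $T\subset Y$ of multiplicity $\mu$ at $p$ with ordinary tangent cone, one gets $\ch_2(X).\widetilde T=\ch_2(Y).T-\tfrac52\,\mu$; in particular, for $T$ smooth at $p$, $\ch_2(X).\widetilde T=\ch_2(Y).T-\tfrac52$.

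\medskip
It then remains to produce a surface $T\subset Y$ through $p$ with $\ch_2(Y).T\le\tfrac52\,\mu$. If $Y$ (resolved near $p$) is not $2$-Fano, any smooth surface $T$ disjoint from $p$ with $\ch_2(Y).T\le 0$ works, its isomorphic preimage in $X$ already showing that $X$ is not $2$-Fano. If $Y$ is $2$-Fano, then the Fano condition on $X=\mathrm{Bl}_pY$ forces $0<-K_X.\widetilde C=-K_Y.C-a\,m_p(C)$ for every rational curve $C$ through $p$, so rational curves through $p$ have large anticanonical degree; combined with the classification of $2$-Fano fourfolds (Araujo--Castravet in index $\ge n-2$, together with a direct check that the remaining low-index candidates such as $V_5^{4}$ carry a plane of negative $\ch_2$ and so are not $2$-Fano) this should leave only $Y\cong\bP_4$ and $Y\cong Q_4$, and each of these contains a plane $\bP_2=T\ni p$, smooth at $p$, with $\ch_2(Y).T\le\tfrac52$, whence $\ch_2(X).\widetilde T\le 0$ as desired.

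\medskip
The step I expect to be the main obstacle is this last one: in the smooth blow-up case one must genuinely exclude an ``exotic'' $2$-Fano fourfold $Y$ -- necessarily of Picard number one, after reducing the case $b_2(Y)=b_2(X)-1\ge 2$ by descending induction on $b_2$ via Theorem~1 -- through every point of which only surfaces with $\ch_2(Y).T>\tfrac52\,\mu$ pass. What I would want here is a robust statement that a Fano fourfold always carries, through any given point, a surface swept out by rational curves of bounded degree with correspondingly bounded $\ch_2$; absent that, one leans on the (currently only partial) classification of $2$-Fanos. The singular cases require, in addition, a careful bookkeeping of the discrepancy and of the resolution corrections in the displayed formulas, but present no new conceptual difficulty.
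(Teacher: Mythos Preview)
Your approach has a genuine circularity problem. You propose to study the target $Y$ of the given contraction $f$, and your key step is: if $Y$ (or its resolution) is $2$-Fano, then use the classification of $2$-Fano fourfolds to pin $Y$ down to $\bP_4$ or $Q_4$. But no such classification exists --- indeed, establishing constraints on $2$-Fano fourfolds is precisely the goal of the paper. You yourself flag this (``one leans on the (currently only partial) classification''); the honest reading is that the argument does not close. The Araujo--Castravet results cover only index $\ge n-2$, and the Fano condition on the blow-up gives you bounds on anticanonical degrees of curves through $p$, not a lower bound on the index of $Y$. Your inductive suggestion (reduce $b_2(Y)\ge 2$ via Theorem~1) also does not finish: Theorem~1 only says that the contractions of $Y$ are birational, not that $Y$ fails to be $2$-Fano.

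There is also a subsidiary gap: your case list for $(D,\sO_D(-D))$ is incomplete. Ando's theorem (cited in the paper) allows, besides $(\bP_3,\sO(1))$, $(\bP_3,\sO(2))$, $(Q_3,\sO(1))$ and the quadric cone, the case where $D$ is a Gorenstein Del~Pezzo threefold with $\omega_D\cong (A^*)^{\otimes 2}$. In that case $D$ need not contain a plane and $f$ need not be any kind of blow-up of a point, so your displayed formula $\ch_2(X)=f^{*}\ch_2(Y)-\tfrac{5}{2}[\Pi]$ has no obvious meaning.

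The paper's proof avoids all of this by never looking at $Y$. Instead it picks a \emph{second} extremal ray $\mathbb{R}_{+}[C_0]$ with $D.C_0>0$ and invokes a structural result of Tsukioka (and Casagrande): the associated contraction $g\colon X\to Z$ is either a conic bundle --- already excluded by Proposition~\ref{prop:conicbund} --- or the blow-up of a smooth surface $W$ in a smooth Fano fourfold $Z$. Lemma~\ref{lem:blowupsurf} then reduces the problem to showing $\det N_{W|Z}$ is positive on some curve; this is handled by Fujita's classification when $\rho(X)\ge 3$, and by a direct analysis of the scheme $D\cap E$ (with $E$ the exceptional divisor of $g$) when $\rho(X)=2$. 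The essential trick you are missing is this passage to the second extremal ray, which replaces the uncontrolled target $Y$ by a smooth Fano $Z$ and a smooth center $W$.
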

\begin{proof}
We assume that $X$ is $2$-Fano.

As in~\cite[Prop.~5]{Tsu06}, we can choose an extremal ray $\mathbb{R}_+[C_0]$ on $X$ such that $D.C_0 > 0$. Then by the proof of~\cite[Prop.~5]{Tsu06} (cf.~also \cite[Prop.~3.1]{Cas09}), the associated extremal contraction $g\colon X \to Z$ is either a conic bundle or the blow-up of a smooth surface inside a smooth Fano fourfold. We already showed in Proposition~\ref{prop:conicbund} that the conic bundle case cannot occur, so we conclude that $Z$ must be a Fano fourfold and $g$ is the blow-up of some smooth surface $W \subset Z$.

We denote by $E\subset X$ the exceptional divisor of~$g$. Then we have $E \cong \bP(N_{W|Z}^*)$ with the tautological line bundle $\sO_E(1) := \sO_{\bP(N_{W|Z}^*)}(1)$ and the natural projection map
\[
\pi\colon E \to W.
\]

In the case $\rho(X) \ge 3$ this situation has been studied in~\cite{Fuj12}. Fujita gives a very explicit description of the possible configurations that can occur, but for our purposes it is sufficient to notice that from his Thm.~1.1 it easily follows that $\det N_{W|Z}$ is always ample, which contradicts our Lemma~\ref{lem:blowupsurf}.

So for the rest of the proof we can assume that $\rho(X)=2$. We consider the scheme-theoretic intersection
\[ V := D \cap E.\]
\begin{claim}\label{cl:red}
$V$ is a reduced section of~$\pi$, i.e., $\pi|_V$ is an isomorphism.
\end{claim}
If we assume Claim~\ref{cl:red} to be true, we immediately obtain that $D$ and $E$ intersect transversally and that $g|_D\colon D \to g(D)=: D'$ is an isomorphism. Now since $f$ is the contraction of an extremal ray, we have
\begin{equation}\label{eq:relpic}
\operatorname{Im}(\Pic X \to \Pic D) \cong \bZ
\end{equation}
(cf.~\cite[Thm.~2.1]{And85}). This implies in particular that $V=E|_D$ is an ample Cartier divisor on~$D$ and thus that $W$ is an ample Cartier divisor on $D' = g(D)$.

Since $\rho(X) = 2$, it follows that $\rho(Z) = 1$, so that $D'$ is an ample divisor on~$Z$. The adjunction formula now gives
\begin{equation}\label{eq:dbladj}
\sO_W(K_W) = (\omega_{D'}\otimes \sO_{D'}(W))|_W = \sO_{D'}(K_Z+D'+W)|_W.
\end{equation}

But since $Z$ and $W$ are smooth, we also have
\begin{equation}\label{eq:adjform}
\sO_W(K_W) = \sO_W(K_Z) \otimes \det N_{W|Z}.
\end{equation}
Comparing \eqref{eq:dbladj} and \eqref{eq:adjform}, we obtain
\[ \det N_{W|Z} = \sO_{D'}(D'+W)|_W, \]
thus $\det N_{W|Z}$ is ample by our previous considerations. This contradicts Lemma~\ref{lem:blowupsurf}.

It remains to prove Claim~\ref{cl:red}. To do this, we need to obtain more information about~$W$. We first note that since $X$ is $2$-Fano, we have
\[ \det N_{W|Z}^*.C \ge 1\quad\text{for any curve $C\subset W$} \]
by Lemma~\ref{lem:blowupsurf}. Since $Z$ is Fano, this implies by~\eqref{eq:adjform} that $-K_W$ is ample. Furthermore, \eqref{eq:adjform} gives
\[ -K_W.C \ge 2 \quad\text{for any curve $C\subset W$.} \]
In particular, $W$ cannot contain any $(-1)$-curve, so we conclude that either $W \cong \bP_2$ or $W \cong \bP_1 \times \bP_1$.

The map $f$ induces a contraction on~$E$ contracting $V$ to a point. We now consider the reduction
\[ \tilde{V} := V_{\mathrm{red}} = (D\cap E)_{\mathrm{red}}. \]
The restriction $g|_{\tilde{V}}$ is then a finite morphism, so $\tilde{V}$ is a multisection of~$\pi$. Now let $\ell \subset W$ be a general line. Then $f|_{\pi^{-1}(\ell)}$ must be the contraction of the minimal section of the Hirzebruch surface $\pi^{-1}(\ell)$. This implies that $\tilde{V}$ is indeed a section of~$\pi$. Furthermore, since $N_{\tilde{V}|E}^*|_\ell$ is ample, it follows that $N_{\tilde{V}|E}^*$ is ample.

Thus it only remains to show that $V$ is reduced. As a divisor on~$E$, we can write
\[ V = \lambda \tilde{V} \qquad\text{for some $\lambda \ge 1$.}\]
Claim~\ref{cl:red} is then proved if we show that $\lambda = 1$. In order to show this, we first use Ando's result on the classification of the general fiber of the exceptional divisor of an extremal contraction \cite[Thm.~2.1]{And85}. By this result we have one of the following cases:
\begin{enumerate}
\item $D \cong \bP_3$,
\item $D \cong Q_3$, or
\item $D$ is a Del Pezzo variety such that if $A$ is an ample generator of~$\operatorname{Im}(\Pic X \to \Pic D) \cong \bZ$, then
\begin{equation}\label{eq:delpezzo}
\omega_D \cong (A^*)^{\otimes 2}.
\end{equation}  
\end{enumerate}
In the first case ($D\cong \bP_3$), Tsukioka proves in~\cite[Lem.~2]{Tsu06} that $\lambda = 1$, but his argument also applies to the second case ($D\cong Q_3$): If $\lambda > 1$, then $V = D\cap E$ is singular in every point of~$\tilde{V}$, so in particular $T_D|_{\tilde{V}} \cong T_E|_{\tilde{V}}$ (observe that $D$ is smooth). This implies that $N_{\tilde{V}|D} \cong N_{\tilde{V}|E}$. This is a contradiction since $N_{\tilde{V}|D}$ is ample because $\rho(D)=1$, but $N_{\tilde{V}|E}$ is negative as seen above.

It remains to study the Del Pezzo case. Since by adjunction,
\[ \omega_D \cong \sO_D(K_X+D), \]
we have
\begin{equation}
\sO_D(K_X) \cong \sO_D(D)
\end{equation}
by~\eqref{eq:delpezzo}. We thus get
\begin{equation}\label{eq:drestv}
\sO_{\tilde{V}}(D) \cong \sO_{\tilde{V}}(K_X) \cong \sO_{\tilde{V}}(g^*K_Z+E) \cong \sO_E(-1)|_{\tilde{V}}\otimes \sO_W(K_Z).
\end{equation}
On the other hand, we have
\[
\sO_{\tilde{V}}(D) \cong \sO_{E}(V)|_{\tilde{V}} \cong \bigl(\sO_{E}(\tilde{V})|_{\tilde{V}}\bigr)^{\otimes \lambda},
\]
i.e., $\sO_{\tilde{V}}(D)$ is divisible by~$\lambda$.

Now since $\tilde{V}$ is a section of~$\pi$, it is given by an element in $H^0(\sO_E(1)\otimes \pi^*L)$ for some $L \in \Pic W$, or, equivalently, by a section
\[ s \in H^0(N_{W|Z}^*\otimes L) \]
without zeroes. The section $s$ induces a short exact sequence of vector bundles
\begin{equation}\label{eq:normseq}
 0 \longrightarrow L^* \longrightarrow N_{W|Z}^* \longrightarrow \det N_{W|Z}^*\otimes L \longrightarrow 0,
\end{equation}
from which we conclude that
\begin{equation}\label{eq:taut}
\sO_E(1)|_{\tilde{V}} \otimes L \cong N_{\tilde{V}|E} \cong \det N_{W|Z}^*\otimes L^{\otimes 2}.
\end{equation}
This implies together with~\eqref{eq:drestv} that
\begin{equation}\label{eq:donv}
 \sO_{\tilde{V}}(D) \cong \sO_W(K_Z) \otimes \det N_{W|Z} \otimes L^* \cong \sO_W(K_W) \otimes L^*.
\end{equation}
Since $\sO_D(-D)$ is ample, we can conclude that $\sO_W(-K_W)\otimes L$ is an ample line bundle.

We now first consider the case $W\cong\bP_2$. Then $-K_W = \sO_{\bP_2}(3)$. Since $Z$ is Fano and Lemma~\ref{lem:blowupsurf} holds, only two cases can occur by~\eqref{eq:adjform}:
\begin{enumerate}
\item $\sO_W(-K_Z) \cong \sO_{\bP_2}(1)$ and $\det N_{W|Z}^* \cong \sO_{\bP_2}(2)$, or
\item $\sO_W(-K_Z) \cong \sO_{\bP_2}(2)$ and $\det N_{W|Z}^* \cong \sO_{\bP_2}(1)$.
\end{enumerate}
If we let
\[
L \cong \sO_{\bP_2}(a),
\]
then we have $a\ge -2$ because $L- K_W$ is ample as shown above. We already showed above that $N_{\tilde{V}|E}^* \cong \det N_{W|Z}\otimes (L^*)^{\otimes 2}$ is ample, which yields
$\deg N_{W|Z}-2a > 0$, i.e., $a < \tfrac{1}{2}\deg N_{W|Z}$. In the two cases stated above, this means the following:
\begin{enumerate}
\item In this case, we have $-2 \le a < -1$, so $a=-2$ and thus by~\eqref{eq:donv}
\[ \sO_{\tilde{V}}(D) \cong \sO_{\bP_2}(1). \]
Since $\sO_{\tilde{V}}(D)$ is divisible by~$\lambda$, we get $\lambda=1$.
\item Here we must have $-2 \le a < -\tfrac{1}{2}$, so $a\in\{-2, -1\}$. If $a=-2$, we get $\lambda=1$ as in the first case. If $a=-1$, we get
\[ \sO_{E}(-1)|_{\tilde{V}} \cong \sO_{\tilde{V}} \]
by~\eqref{eq:taut}. This is a contradiction because
\[ \sO_{E}(-1)|_{\tilde{V}} \cong \sO_E(E)|_{\tilde{V}} \cong \sO_D(E)|_{\tilde{V}} \]
is ample by~\eqref{eq:relpic}
\end{enumerate}

It remains to consider the case $W\cong \bP_1\times\bP_1$. Then $-K_W \cong \sO_{\bP_1\times\bP_1}(2,2)$ and thus by~\eqref{eq:adjform}, we must have
\[ \sO_W(-K_Z) \cong \det N_{W|Z}^* \cong \sO_{\bP_1\times\bP_1}(1,1) \]
(again using Lemma~\ref{lem:blowupsurf} and the fact that $Z$ is Fano). If we let
\[
L \cong \sO_{\bP_1\times\bP_1}(a,b)
\]
and use again the fact that $L-K_W$ is ample, we have $a$, $b\ge -1$. From the ampleness of~$N_{\tilde{V}|E}^*$ we obtain $-1-2a > 0$ and $-1-2b > 0$, so $a=b=-1$ and thus by~\eqref{eq:donv}
\[
\sO_{\tilde{V}}(D) \cong \sO_{\bP_1\times\bP_1}(1,1),
\]
from which we again conclude $\lambda=1$.
\end{proof}

\bibliographystyle{amsalpha}
\bibliography{literatur}
\end{document}